   \newtheorem{lemma}{Lemma}[section]
   \newtheorem{theorem}{Theorem}[section]
   \newcommand{\be}{\begin{equation}}
   \newcommand{\ee}{\end{equation}}
\begin{document}
    \title{An unconditionally stable numerical approach for solving a nonlinear distributed delay Sobolev model}
  \author{Eric Ngondiep\thanks{\textbf{Correspondence to:} Eric Ngondiep, engondiep@imamu.edu.sa/ericngondiep@gmail.com}}
   \date{\small{Department of Mathematics and Statistics, College of Science, Imam Mohammad Ibn Saud\\ Islamic University
        (IMSIU), $90950$ Riyadh $11632,$ Saudi Arabia.}}
   \maketitle

   \textbf{Abstract.}
   This paper proposes an unconditionally stable numerical method for solving a nonlinear Sobolev model with distributed delay. The proposed computational approach approximates the time derivative by interpolation technique whereas the spatial derivatives are approximated using the finite element approximation. This combination is simple and easy to implement. Both stability and error estimates of the constructed method are deeply analyzed in a strong norm which is equivalent to the $H^{1}$-norm. The theoretical results indicate that the constructed approach is unconditionally stable, spatial fourth-order accurate, second-order convergent in time and more efficient than a large class of numerical methods discussed in the literature for solving a general class of delay Sobolev problems. Some numerical examples are carried out to confirm the theory and demonstrate the applicability and validity of the developed technique.\\
    \text{\,}\\

   \ \noindent {\bf Keywords:} distributed delay, nonlinear Sobolev equation, a strong norm, high-order computational technique, unconditional stability and error estimates.\\
   \\
   {\bf AMS Subject Classification (MSC). 65M12, 65M06}.

  \section{Introduction}\label{sec1}
    Modeling in physical, social and biological sciences sometimes considers the time delay inherent in the phenomena \cite{35xg,4xg,31xg}. In the literature, several aspects of distributed delay models have been analyzed such as: oscillation \cite{29xg}, stability properties \cite{16xg}, periodic solutions \cite{22xg,19xg,13xg}, etc... Sobolev equations were used to model complex real-worth entities in physics and engineering: thermodynamics \cite{7tr}, shear in second-order fluid \cite{2tr,4tr}, flows of fluids through fissured \cite{1tr}, and so on. These equations are deeply analyzed in the literature by a large class of numerical methods \cite{14tr,16tr,10tr,12tr}. Most recently, the authors \cite{19tr,17tr} have shown that Sobolev equations with memory, also called delay Sobolev equations (DSEs) are more realistic and effective in the modeling of practical problems. A broad range of applications have motivated many researchers to develop efficient numerical approaches in approximate solutions such as: finite difference schemes \cite{21tr,22tr}, linearized compact difference methods \cite{27tr,25tr,29tr,33tr} and compact multistep procedures \cite{32tr,31tr}.\\

    In this paper, we consider the nonlinear distributed delay Sobolev model defined in \cite{tr} as
     \begin{equation}\label{1}
     (\mathcal{I}-\beta\Delta)v_{t}-\alpha\Delta v=f\left(x,t,v,\int_{t-\tau}^{t}g(x,t,s,v(s))ds\right), \text{\,\,\,\,on\,\,\,\,} \Omega\times[0,\text{\,}T_{f}),
     \end{equation}
      subjects to initial condition
      \begin{equation}\label{2}
      v(x,t)=u_{0}(x,t),\text{\,\,\,\,for\,\,\,\,}(x,t)\in(\overline{\Omega}=\Gamma\cup\Omega)\times[-\tau,\text{\,}0],
     \end{equation}
     and boundary condition
      \begin{equation}\label{3}
      v(x,t)=\rho(x,t)\text{\,\,\,\,for\,\,\,\,}(x,t)\in\Gamma\times[0,\text{\,}T_{f}],
     \end{equation}
     where $\mathcal{I}$ means the identity operator, $\Omega$ is a bounded domain in $\mathbb{R}^{d}$ ($d=1$, $2$), and $\Gamma$ denotes its boundary, $T_{f}>0$ represents the final time, $\tau>0$ is a given constant, $\alpha$ and $\beta$ are two positive physical parameters, $v$ is the unknown function, $v_{t}$ means $\frac{\partial v}{\partial t}$, $\Delta$ designates the laplacian operator, while $u_{0}$ and $\rho$ are the initial and boundary conditions, respectively. To ensure the existence and uniqueness of a smooth solution to the initial-boundary value problem $(\ref{1})$-$(\ref{3})$, $u_{0}$ and $\rho$ are assumed to satisfy the requirement $u_{0}(x,0)=\rho(x,0)$, for every $x\in\Gamma$. Additionally, for the sake of stability analysis and error estimates, we assume that both functions $f$ and $g$ are regular enough and satisfy the local Lipschitz conditions in the third and fourth variables. It is worth mentioning that equation $(\ref{1})$ falls in the class of complex unsteady partial differential equations (PDEs) \cite{en1,17tr,21xg,en2,19tr,18tr} whose the computation of exact solutions is very difficult and often impossible. When the function $g$ equals zero, this equation yields a nonlinear Sobolev equation while for $\beta=0$ and $g=0$, equation  $(\ref{1})$ becomes a nonlinear reaction-diffusion equation. Many authors have proposed efficient numerical methods in computed solutions for both classes of time-dependent PDEs. For more details, interested readers can consult \cite{en3,36tr,15tr,en4,27tr} and the references therein. Most recently, some authors \cite{tr,33tr} have developed computational approaches for solving Sobolev and semilinear parabolic problems with distributed delay. In this paper, we construct a high-order unconditionally stable computational technique in a computed solution of the parabolic equation  $(\ref{1})$ subjects to initial condition  $(\ref{2})$ and boundary condition  $(\ref{3})$. The proposed approach consists to approximate the time derivative using a "noncentered" three-level difference scheme whereas the approximation of the space derivatives considers the finite element formulation. The new algorithm is unconditionally stable, second-order accurate in time and spatial fourth-order convergent. In addition, the first two terms required to start the algorithm are directly obtained from the initial condition. It is also important to remind that the new strategy is more efficient than a wide set of numerical schemes discussed in the literature \cite{11tr,15tr,tr,21tr,33tr,25tr}, for solving a general class of delay PDEs. The highlights of the paper are the following:
     \begin{description}
      \item[(i)] full description of the high-order computational method in an approximate solution of the initial-boundary value problem $(\ref{1})$-$(\ref{3})$,
      \item[(ii)] analysis of stability and error estimates of the constructed approach,
      \item[(iii)] numerical experiments to confirm the theory and demonstrate the applicability and validity of the proposed algorithm.
     \end{description}
      The remainder of the paper is organized as follows. In Section $\ref{sec2}$, we construct the high-order computational technique for solving the nonlinear unsteady equation $(\ref{1})$ with initial and boundary conditions $(\ref{2})$ and $(\ref{3})$, respectively. Section $\ref{sec3}$ deals with the stability analysis and error estimates of the developed algorithm whereas some numerical examples which confirm the theoretical studies are carried out in Section $\ref{sec4}$. Finally, in Section $\ref{sec5}$ we draw the general conclusions and provide our future works.

    \section{Description of the proposed computational technique}\label{sec2}
    In this Section, we describe the proposed high-order numerical approach in an approximate solution of the nonlinear Sobolev equation with distributed delay $(\ref{1})$ subjects to initial and boundary conditions $(\ref{2})$ and $(\ref{3})$, respectively.\\

     Let $m$ and $N$ be two positive integers. Set $\sigma=\frac{\tau}{m}$, be the time step. For the convenience of discretization we assume that the final time $T_{f}$ is chosen so that $\sigma=\frac{T_{f}}{N}$. Consider $\Omega_{\sigma}=\{t_{n}=n\sigma,\text{\,}-m\leq n\leq N\}$, be a uniform partition of the time interval $[-\tau,\text{\,}T_{f}]$. Suppose that $\mathcal{T}_{h}$ is the regular partition of the domain $\overline{\Omega}=\Gamma\cup\Omega$, consisting of triangles (or segments) $T$ of diameter (or length) $d(T)$, where $h=\sup\{d(T),\text{\,\,}T\in\mathcal{T}_{h}\}$, satisfying the following restrictions: (a) the triangulation $\mathcal{T}_{h}$ is regular while the triangulation $\mathcal{T}_{\Gamma,h}$ induced on the boundary $\Gamma$ is quasi-uniform, (b) the interior of any triangle $T$ is nonempty and the intersection of the interior of two different triangles is empty whereas the intersection of two triangles is empty, or a common vertex or edge.\\

     We denote $H^{q}(\Omega)$, be the Sobolev space equipped with the scalar product $\left(\cdot,\cdot\right)_{q}$ and the corresponding norm is $\left\|\cdot\right\|_{q}$, where $q$ is a nonnegative integer. Additionally, the space $L^{2}(\Gamma)$ is endowed with the inner product $\left(\cdot,\cdot\right)_{\Gamma}$ and associated norm is $\left\|\cdot\right\|_{\Gamma}$. We remind that the space $H^{0}(Q)$ coincides with $L^{2}(Q)$, where $Q=\Omega$ or $\Gamma$. Since the functions $f$ and $g$ are assumed to be smooth enough, the space containing the analytical solution of the nonlinear distributed delay Sobolev equation $(\ref{1})$ with boundary condition $(\ref{3})$ should be defined as
     \begin{equation}\label{4}
       V=\{u\in H^{4}(0,T_{f};\text{\,}H^{5}(\Omega)):\text{\,\,}u|_{\Gamma}=\rho\}.
       \end{equation}
     In addition, the finite element space approximating the solution of the parabolic equation $(\ref{1})$ is given by
     \begin{equation}\label{5}
       V_{h}=\{u_{h}(t)\in H^{1}(\Omega):\text{\,\,}u_{h}(t)|_{T}\in\mathcal{P}_{5}(T),\text{\,\,}\forall T\in\mathcal{T}_{h},\text{\,\,for\,\,}t\in[0,\text{\,}T_{f}]\},
       \end{equation}
       where, $\mathcal{P}_{5}(T)$ means the set of polynomials defined on $T$ with degree less than or equal $5$. Let $l=(l_{1},...,l_{d})\in\mathbb{N}^{d}$, $x=(x_{1},...,x_{d})\in\mathbb{R}^{d}$, $|l|=l_{1}+...+l_{d}$, $\partial x^{|l|}=\partial x_{1}^{l_{1}}...\partial x_{d}^{l_{d}}$, $D_{x}^{l}u=\frac{\partial^{|l|}u}{\partial x^{|l|}}$ and $D_{t,x}^{s,l}u=\frac{\partial^{s+|l|}u}{\partial t^{s}\partial x^{|l|}}$, where $D_{t,x}^{0,0}u:=u$ and $D_{x}^{0}u:=u$. Suppose that $p,q\in\mathbb{N}$, the Sobolev spaces $H^{q}(\Omega)$, $L^{\infty}(0,T_{f};\text{\,}H^{q})$ and $H^{p}(0,T;\text{\,}H^{q})$, are defined as
    \begin{equation}\label{6}
     H^{q}(\Omega)=\left\{u\in L^{2}(\Omega):\text{\,\,}D_{x}^{l}u\in L^{2}(\Omega),\text{\,\,for\,\,}|l|\leq q\right\},
       \end{equation}
     \begin{equation}\label{7}
     L^{\infty}(0,T_{f};H^{q})=\left\{u\in L^{\infty}(0,T_{f};\text{\,}L^{2}):\text{\,\,}D_{x}^{l}u\in L^{\infty}(0,T_{f};\text{\,}L^{2}),\text{\,\,for\,\,}|l|\leq q\right\},
       \end{equation}
      \begin{equation}\label{8}
     H^{p}(0,T_{f};\text{\,}H^{q})=\left\{u\in L^{2}(0,T_{f};\text{\,}\text{\,}L^{2}):\text{\,\,}D_{t,x}^{s,l}u\in L^{2}(0,T_{f};\text{\,}L^{2}),\text{\,\,}s=0,1,...,p,\text{\,\,}|l|\leq q\right\}.
     \end{equation}
     Here: the spaces $L^{2}(\Omega)$ and $[L^{2}(\Omega)]^{2}$ are endowed with the scalar products $\left(\cdot,\cdot\right)_{0}$ and $\left(\cdot,\cdot\right)_{\bar{0}}$, and associated norms $\left\|\cdot\right\|_{0}$ and $\left\|\cdot\right\|_{\bar{0}}$, respectively.
     \begin{equation*}
      \left(u_{1},u_{2}\right)_{0}=\int_{\Omega}u_{1}u_{2}\text{\,}dx,\text{\,\,}\left\|u_{1}\right\|_{0}=\sqrt{\left(u_{1},u_{1}\right)_{0}},\text{\,\,\,\,\,}\forall u_{1},u_{2}\in L^{2}(\Omega),\text{\,\,and\,\,}
       \end{equation*}
       \begin{equation}\label{9}
        \left(z,w\right)_{\bar{0}}=\int_{\Omega}z^{t}w\text{\,}dx,\text{\,\,}\left\|w\right\|_{\bar{0}}=\sqrt{\left(w,w\right)_{\bar{0}}},\text{\,\,\,\,\,for\,\,}z,w\in [L^{2}(\Omega)]^{2},
       \end{equation}
       where $z^{t}$ denotes the transpose of a vector-function $z\in[L^{2}(\Omega)]^{2}$. Furthermore, the Sobolev spaces: $H^{q}(\Omega)$, $L^{\infty}(0,T_{f};\text{\,}H^{q})$ and $H^{p}(0,T_{f};\text{\,}H^{q})$, are equipped with the following norms
      \begin{equation*}
       \|u\|_{q}=\left(\underset{|m|\leq q}{\sum}\|D_{x}^{m}u\|_{0}^{2}\right)^{\frac{1}{2}},\text{\,\,\,\,}\forall u\in H^{q}(\Omega),\text{\,\,\,\,}\||w|\|_{q,\infty}=\underset{t\in[0,\text{\,}T_{f}]}{\max}\|w(t)\|_{q},\text{\,\,\,}\forall w\in L^{\infty}(0,T_{f};\text{\,}H^{q}),
       \end{equation*}
       \begin{equation}\label{9a}
        \||w|\|_{q,p}=\left(\underset{l=0}{\overset{p}\sum}\int_{0}^{T_{f}}\underset{|m|\leq q}{\sum}\left\|D_{t,x}^{l,m}w(t)\right\|_{0}^{2}dt\right)^{\frac{1}{2}}, \text{\,\,\,\,\,}\forall w\in H^{p}(0,T_{f};\text{\,}H^{q}).
       \end{equation}

        We introduce the bilinear operators $a(\cdot,\cdot)$ and $a_{\Gamma}(\cdot,\cdot)$ defined as:
       \begin{equation}\label{10}
       a(u,w)=\left(\nabla u,\nabla w\right)_{\bar{0}},\text{\,\,\,}a_{\Gamma}(u,w)=\left(u,w\right)_{\Gamma}=\int_{\Gamma}u(\nabla w)^{t}\vec{z}\text{\,}d\Gamma,
       \end{equation}
       where $\vec{z}$ represents the unit outward normal vector on $\Gamma=\partial\Omega$. The following integration by parts will play an important role in our study.
       \begin{equation}\label{11}
       \left(\Delta u,w\right)_{0}=a_{\Gamma}(u,w)-a(u,w),\text{\,\,\,for\,\,\,every\,\,\,}u\in H^{2}(\Omega)\text{\,\,\,and\,\,\,}w\in H^{1}(\Omega).
       \end{equation}

       Finally, for the convenience of writing, we set  $u^{n}=u(t_{n})$ and $u:=u(t)\in H^{1}(\Omega)$, for any $t\in[0,\text{\,}T_{f}]$. Now, applying equation $(\ref{1})$ at the discrete time $t_{n+1}$, for $n=0,1,2,...,N-1$, and using the fact that $\tau=t_{m}$, this gives
       \begin{equation}\label{12}
       (\mathcal{I}-\beta\Delta)v_{t}^{n+1}-\alpha\Delta v^{n+1}=f\left(x,t_{n+1},v^{n+1},\int_{t_{n-m+1}}^{t_{n+1}}g(x,t_{n+1},s,v(s))ds\right).
       \end{equation}

       Approximating the function $v(t)$ at the discrete points: $(t_{n-1},\text{\,}v^{n-1})$, $(t_{n},\text{\,}v^{n})$ and $(t_{n+1},\text{\,}v^{n+1})$, this yields
       \begin{equation*}
        v(t)=\frac{(t-t_{n})(t-t_{n+1})}{(t_{n-1}-t_{n})(t_{n-1}-t_{n+1})}v^{n-1}+\frac{(t-t_{n-1})(t-t_{n+1})}{(t_{n}-t_{n-1})(t_{n}-t_{n+1})}v^{n}+
        \frac{(t-t_{n-1})(t-t_{n})}{(t_{n+1}-t_{n-1})(t_{n+1}-t_{n})}v^{n+1}+
       \end{equation*}
       \begin{equation*}
        \frac{1}{6}(t-t_{n-1})(t-t_{n})(t-t_{n+1})v_{3t}(\epsilon_{1}(t))=\frac{1}{2\sigma^{2}}\left[(t^{2}-(t_{n}+t_{n-1})t+t_{n}t_{n-1})v^{n+1}-\right.
       \end{equation*}
       \begin{equation*}
       \left.2(t^{2}-(t_{n-1}+t_{n+1})t+t_{n+1}t_{n-1})v^{n}+(t^{2}-(t_{n}+t_{n+1})t+t_{n}t_{n+1})v^{n-1}\right] +\frac{1}{6}(t-t_{n-1})(t-t_{n})(t-t_{n+1})v_{3t}(\epsilon_{1}(t)),
       \end{equation*}
       where $\epsilon_{1}(t)$ is between the maximum and the minimum of $t_{n-1}$, $t_{n}$, $t_{n+1}$ and $t$. The time derivative of this equation provides
       \begin{equation*}
        v_{t}(t)=\frac{1}{2\sigma^{2}}\left[(2t-t_{n}-t_{n-1})v^{n+1}-2(2t-t_{n-1}-t_{n+1})v^{n}+(2t-t_{n}-t_{n+1})v^{n-1}\right]+
       \end{equation*}
       \begin{equation*}
        \frac{1}{6}[(t-t_{n-1})(t-t_{n})(t-t_{n+1})\frac{\partial}{\partial t}v_{3t}(\epsilon_{1}(t))+v_{3t}(\epsilon_{1}(t))\frac{d}{dt}((t-t_{n-1})(t-t_{n})(t-t_{n+1}))].
       \end{equation*}

       Performing direct calculations, it holds
       \begin{equation*}
        v_{t}^{n+1}=\frac{1}{2\sigma}(3v^{n+1}-4v^{n}+v^{n-1})+\frac{\sigma^{2}}{3}v_{3t}(\epsilon_{1}(t)).
       \end{equation*}

       Substituting this equation into $(\ref{12})$ and rearranging terms, this results in
       \begin{equation*}
       \frac{1}{2\sigma}(\mathcal{I}-\beta\Delta)(3v^{n+1}-4v^{n}+v^{n-1})-\alpha\Delta v^{n+1}= f\left(x,t_{n+1},v^{n+1},\int_{t_{n-m+1}}^{t_{n+1}}g(x,t_{n+1},s,v(s))ds\right)-
       \end{equation*}
       \begin{equation}\label{13}
        \frac{\sigma^{2}}{3}(\mathcal{I}-\beta\Delta)v_{3t}(\epsilon_{1}(t)).
       \end{equation}

       Multiplying both sides of equation $(\ref{13})$ by $2\sigma$, for any $u\in H^{1}(\Omega)$, using the inner product $\left(\cdot,\cdot\right)_{0}$, defined in relation $(\ref{9})$, and rearranging terms, we obtain
       \begin{equation}\label{14}
       \left((\mathcal{I}-\beta\Delta)(3v^{n+1}-4v^{n}+v^{n-1}),u\right)_{0}-2\alpha\sigma\left(\Delta v^{n+1},u\right)_{0}=2\sigma\left(G(v^{n+1}),u\right)_{0}-
        \frac{2\sigma^{3}}{3}\left((\mathcal{I}-\beta\Delta)v_{3t}(\epsilon_{1}(t)),u\right)_{0},
       \end{equation}
       where
       \begin{equation}\label{15}
        G(v^{n+1})=f\left(x,t_{n+1},v^{n+1},\int_{t_{n-m+1}}^{t_{n+1}}g(x,t_{n+1},s,v(s))ds\right).
       \end{equation}

        Utilizing the integration by parts given by equation $(\ref{11})$, it is not difficult to observe that equation $(\ref{14})$ is equivalent to
       \begin{equation*}
        \left(3v^{n+1}-4v^{n}+v^{n-1},u\right)_{0}+\beta a(3v^{n+1}-4v^{n}+v^{n-1},u)+2\alpha\sigma a(v^{n+1},u)-\beta a_{\Gamma}(3v^{n+1}-4v^{n}+v^{n-1},u)-
       \end{equation*}
       \begin{equation}\label{16}
       2\alpha\sigma a_{\Gamma}(v^{n+1},u)=2\sigma\left(G(v^{n+1}),u\right)_{0}-\frac{2\sigma^{3}}{3}\left((\mathcal{I}-\beta\Delta)v_{3t}(\epsilon_{1}(t)),u\right)_{0}.
       \end{equation}

       Let $A(\cdot,\cdot)$ be the bilinear operator defined as
       \begin{equation}\label{17}
       A(u,w)=\left(u,w\right)_{0}+\beta a(u,w),\text{\,\,\,\,\,\,\,}\forall u,w\in H^{1}(\Omega).
       \end{equation}

       Using the operator $A(\cdot,\cdot)$ together with the boundary condition $(\ref{3})$, that is, $v^{r}=\rho^{r}$, on $\Gamma\times[0,\text{\,}T_{f}]$, for $r=n-1,n,n+1$, it is easy to see that equation $(\ref{16})$ can be written as
       \begin{equation*}
        3A(v^{n+1},u)+2\alpha\sigma a(v^{n+1},u)=A(4v^{n}-v^{n-1},u)+\beta a_{\Gamma}(3\rho^{n+1}-4\rho^{n}+\rho^{n-1},u)+2\alpha\sigma a_{\Gamma}(\rho^{n+1},u)+
       \end{equation*}
       \begin{equation}\label{18}
       2\sigma\left(G(v^{n+1}),u\right)_{0}-\frac{2\sigma^{3}}{3}\left((\mathcal{I}-\beta\Delta)v_{3t}(\epsilon_{1}(t)),u\right)_{0}.
       \end{equation}

       Since the term $G(v^{n+1})=f\left(x,t_{n+1},v^{n+1},\int_{t_{n-m+1}}^{t_{n+1}}g(x,t_{n+1},s,v(s))ds\right)$ seems to be too complex, so solve equation $(\ref{18})$ should be time consuming. To overcome this issue, we must approximate the quantity $G(v^{n+1})$. The term $\int_{t_{n-m+1}}^{t_{n+1}}g(x,t_{n+1},s,v(s))ds$ can be decomposed as
       \begin{equation}\label{18a}
       \int_{t_{n-m+1}}^{t_{n+1}}g(x,t_{n+1},s,v(s))ds=\overset{-1}{\underset{j=-m}\sum}\int_{t_{n+1+j}}^{t_{n+2+j}}g(x,t_{n+1},s,v(s))ds.
       \end{equation}

       In addition, the interpolation of the function $g(x,t_{n+1},\cdot,v(\cdot))$ at the points $t_{n+1+j}$ and $t_{n+2+j}$, for $j=-m,-m+1,...,-1$, gives
       \begin{equation*}
        g(x,t_{n+1},s,v(s))=\frac{s-t_{n+1+j}}{\sigma}g(x,t_{n+1},t_{n+2+j},v^{n+2+j})-\frac{s-t_{n+2+j}}{\sigma}g(x,t_{n+1},t_{n+1+j},v^{n+1+j})+
       \end{equation*}
       \begin{equation}\label{19}
       \frac{1}{2}(s-t_{n+1+j})(s-t_{n+2+j})\frac{\partial^{2}}{\partial s^{2}}[g(x,t_{n+1},\epsilon_{2}(s),v(\epsilon_{2}(s)))],
       \end{equation}
       where $\epsilon_{2}(s)$ is between the minimum and maximum of $t_{n+1+j}$, $t_{n+2+j}$ and $s$. Using equation $(\ref{19})$ the integration of $g(x,t_{n+1},s,v(s))$ over the interval $[t_{n+1+j},\text{\,}t_{n+2+j}]$ yields
       \begin{equation*}
        \int_{t_{n+1+j}}^{t_{n+2+j}}g(x,t_{n+1},s,v(s))ds=\frac{1}{\sigma}\left[g(x,t_{n+1},t_{n+2+j},v^{n+2+j})\int_{t_{n+1+j}}^{t_{n+2+j}}(s-t_{n+1+j})ds-
        g(x,t_{n+1},t_{n+1+j},\right.
       \end{equation*}
       \begin{equation}\label{20}
       \left.v^{n+1+j})\int_{t_{n+1+j}}^{t_{n+2+j}}(s-t_{n+2+j})ds\right]+\frac{1}{2}\int_{t_{n+1+j}}^{t_{n+2+j}}(s-t_{n+1+j})(s-t_{n+2+j})\frac{\partial^{2}}{\partial s^{2}}[g(x,t_{n+1},\epsilon_{2}(s),v(\epsilon_{2}(s)))]ds.
       \end{equation}

       But the function $s\mapsto \frac{\partial^{2}}{\partial s^{2}}[g(x,t_{n+1},\epsilon_{2}(s),v(\epsilon_{2}(s)))]$ is integrable on $[t_{n+1+j},\text{\,}t_{n+2+j}]$ and the function $s\mapsto (s-t_{n+1+j}) (s-t_{n+2+j})$ doesn't change sign on this interval, applying the weighted mean value theorem and performing direct computations, equation $(\ref{20})$ becomes
       \begin{equation*}
        \int_{t_{n+1+j}}^{t_{n+2+j}}g(x,t_{n+1},s,v(s))ds=\frac{\sigma}{2}\left[g(x,t_{n+1},t_{n+2+j},v^{n+2+j})+g(x,t_{n+1},t_{n+1+j},v^{n+1+j})\right]+
       \end{equation*}
       \begin{equation*}
        \frac{1}{2}[g_{ss}(x,t_{n+1},\overline{\epsilon}_{2},v(\overline{\epsilon}_{2}))+(v_{s}(\overline{\epsilon}_{2})+
        v_{ss}(\overline{\epsilon}_{2}))g_{v}(x,t_{n+1},\overline{\epsilon}_{2},v(\overline{\epsilon}_{2}))
       +v_{s}^{2}(\overline{\epsilon}_{2})g_{vv}(x,t_{n+1},\overline{\epsilon}_{2},v(\overline{\epsilon}_{2}))]*
       \end{equation*}
       \begin{equation}\label{21}
       \int_{t_{n+1+j}}^{t_{n+2+j}}(s-t_{n+1+j})(s-t_{n+2+j})ds,
       \end{equation}
       where $\overline{\epsilon}_{2}\in(t_{n+1+j},t_{n+2+j})$ and "*" denotes the usual multiplication in $\mathbb{R}$. But, straightforward computations provide
       \begin{equation}\label{22}
       \int_{t_{n+1+j}}^{t_{n+2+j}}(s-t_{n+1+j})(s-t_{n+2+j})ds=-\frac{\sigma^{3}}{6}.
       \end{equation}

       Plugging equations $(\ref{21})$ and $(\ref{22})$, we obtain
       \begin{equation*}
        \int_{t_{n+1+j}}^{t_{n+2+j}}g(x,t_{n+1},s,v(s))ds=\frac{\sigma}{2}\left[g(x,t_{n+1},t_{n+2+j},v^{n+2+j})+g(x,t_{n+1},t_{n+1+j},v^{n+1+j})\right]-
       \end{equation*}
       \begin{equation}\label{23}
       \frac{\sigma^{3}}{12}[g_{ss}(x,t_{n+1},\overline{\epsilon}_{2},v(\overline{\epsilon}_{2}))+(v_{s}(\overline{\epsilon}_{2})+
        v_{ss}(\overline{\epsilon}_{2}))g_{v}(x,t_{n+1},\overline{\epsilon}_{2},v(\overline{\epsilon}_{2}))
       +v_{s}^{2}(\overline{\epsilon}_{2})g_{vv}(x,t_{n+1},\overline{\epsilon}_{2},v(\overline{\epsilon}_{2}))].
       \end{equation}

       Setting $z=\int_{t-\tau}^{t}g(x,t,s,v(s))ds$, combining equations $(\ref{23})$, $(\ref{18a})$ and $(\ref{13})$ and applying the Taylor series expansion for the function $f$, with respect to the fourth variable $z$, to get
       \begin{equation*}
       G(v^{n+1})=f\left(x,t_{n+1},v^{n+1},\frac{\sigma}{2}\overset{-1}{\underset{j=-m}\sum}\left(g(x,t_{n+1},t_{n+2+j},v^{n+2+j})+g(x,t_{n+1},t_{n+1+j},v^{n+1+j})\right)\right)-
       \end{equation*}
       \begin{equation}\label{24}
       \frac{m\sigma^{3}}{12}H(x,t_{n+1},\overline{\epsilon}_{2})\frac{\partial f}{\partial z}(x,t_{n+1},v^{n+1},\theta_{n}(x,\overline{\epsilon}_{2})).
       \end{equation}
       Here, 
       \begin{equation*}
       H(x,t_{n+1},\overline{\epsilon}_{2})=[g_{ss}(x,t_{n+1},\overline{\epsilon}_{2},v(\overline{\epsilon}_{2}))
       +(v_{s}(\overline{\epsilon}_{2})+v_{ss}(\overline{\epsilon}_{2}))g_{v}(x,t_{n+1},\overline{\epsilon}_{2},v(\overline{\epsilon}_{2}))\\
       +v_{s}^{2}(\overline{\epsilon}_{2})g_{vv}(x,t_{n+1},\overline{\epsilon}_{2},v(\overline{\epsilon}_{2}))],
       \end{equation*}
       $\theta_{n}(x,\overline{\epsilon}_{2})$ is between the minimum and maximum of $\frac{\sigma}{2}\overset{-1}{\underset{j=-m}\sum}(g(x,t_{n+1},t_{n+2+j},v^{n+2+j})+
      g(x,t_{n+1},t_{n+1+j},v^{n+1+j})$ and 
      \begin{equation*}
       \frac{\sigma}{2}\overset{-1}{\underset{j=-m}\sum}(g(x,t_{n+1},t_{n+2+j},v^{n+2+j})+g(x,t_{n+1},t_{n+1+j},v^{n+1+j}))-
       \frac{m\sigma^{3}}{12}H(x,t_{n+1},\overline{\epsilon}_{2}).
       \end{equation*}
       Let
       \begin{equation*}
        F(v^{n+1})=f\left(x,t_{n+1},v^{n+1},\frac{\sigma}{2}\overset{-1}{\underset{j=-m}\sum}\left(g(x,t_{n+1},t_{n+2+j},v^{n+2+j})+g(x,t_{n+1},t_{n+1+j},v^{n+1+j})\right)\right)=
       \end{equation*}
       \begin{equation}\label{25}
       f\left(x,t_{n+1},v^{n+1},\frac{\sigma}{2}\overset{m}{\underset{j=1}\sum}\left(g(x,t_{n+1},t_{n+2-j},v^{n+2-j})+g(x,t_{n+1},t_{n+1-j},v^{n+1-j})\right)\right),
       \end{equation}
       \begin{equation}\label{26}
       R_{1}^{n+1}=H(x,t_{n+1},\overline{\epsilon}_{2})\frac{\partial f}{\partial z}(x,t_{n+1},v^{n+1},\theta_{n}(x,\overline{\epsilon}_{2})).
       \end{equation}

       Since $\tau=m\sigma$, substituting equation $(\ref{24})$ into equation $(\ref{18})$, utilizing equations $(\ref{25})$-$(\ref{26})$ and rearranging terms, this results in
       \begin{equation*}
        3A(v^{n+1},u)+2\alpha\sigma a(v^{n+1},u)-2\sigma\left(F(v^{n+1}),u\right)_{0}=A(4v^{n}-v^{n-1},u)+\beta a_{\Gamma}(3\rho^{n+1}-4\rho^{n}+\rho^{n-1},u)+
       \end{equation*}
       \begin{equation}\label{27}
       2\alpha\sigma a_{\Gamma}(\rho^{n+1},u)-\frac{\sigma^{3}\tau}{6}\left(R_{1}^{n+1},u\right)_{0}-
       \frac{2\sigma^{3}}{3}\left((\mathcal{I}-\beta\Delta)v_{3t}(\epsilon_{1}(t)),u\right)_{0}.
       \end{equation}

       Omitting the error term: $-\frac{\sigma^{3}\tau}{6}\left(R_{1}^{n+1},u\right)_{0}-\frac{2\sigma^{3}}{3}\left((\mathcal{I}-\beta\Delta)v_{3t}(\epsilon_{1}(t)),u\right)_{0}$, and replacing the exact solution $v\in V$ with the approximate one $v_{h}(t)\in V_{h}$, for every $t\in[0,\text{\,}T_{f}]$, to obtain the desired high-order computational approach for solving the nonlinear Sobolev equation with distributed delay $(\ref{1})$, subjects to initial-boundary conditions $(\ref{2})$-$(\ref{3})$. That is, given $v^{n-1},v^{n}\in V_{h}$, find $v^{n+1}\in V_{h}$, for $n=1,2,...,N-1$, so that
       \begin{equation*}
        3A(v_{h}^{n+1},u)+2\alpha\sigma a(v_{h}^{n+1},u)-2\sigma\left(F(v_{h}^{n+1}),u\right)_{0}=A(4v_{h}^{n}-v_{h}^{n-1},u)+\beta a_{\Gamma}(3\rho^{n+1}-4\rho^{n}+\rho^{n-1},u)+
       \end{equation*}
       \begin{equation}\label{28}
       2\alpha\sigma a_{\Gamma}(\rho^{n+1},u),\text{\,\,\,\,\,\,\,\,}\forall u\in H^{1}(\Omega),
       \end{equation}
       subjects to initial condition
       \begin{equation}\label{29}
       v_{h}^{n}=u_{0}^{n},\text{\,\,\,\,for\,\,\,\,}n=-m,-m+1,...,0.
       \end{equation}

        Formulations $(\ref{28})$-$(\ref{29})$ represent the proposed high-order unconditionally stable computational technique in a computed solution of the initial-boundary value problem $(\ref{1})$-$(\ref{3})$.

     \section{Analysis of stability and error estimates}\label{sec3}
       This Section deals with the stability analysis and error estimates of the developed computational technique $(\ref{28})$-$(\ref{29})$ in a computed solution of the initial-boundary value problem $(\ref{1})$-$(\ref{3})$. We assume that the finite element space $V_{h}$ defined by equation $(\ref{5})$ satisfies the approximation properties of the piecewise polynomials of degrees $3$, $4$ and $5$. Moreover, for any $u\in H^{5}(\Omega)$,
     \begin{equation}\label{29a}
     \inf\{\|u-u_{h}\|_{0}^{2}:\text{\,\,}u_{h}\in V_{h}\}\leq C_{1}h^{10}\|u\|_{5}^{2}\text{\,\,\,\,and\,\,\,\,}\inf\{\|\nabla(u-u_{h})\|_{\bar{0}}^{2}:\text{\,\,}u_{h}\in V_{h}\}\leq C_{2}h^{8}\|u\|_{5}^{2},
     \end{equation}
     where $C_{j}$, for $j=1,2$, are two positive constants independent of the time step $\sigma$ and space size $h$. For the sake of stability analysis and error estimates, it is assumed that the analytical solution $v$ belongs to the Sobolev space $H^{4}(0,T_{f};\text{\,}H^{5})$, that is, there is a constant $\widehat{C}>0$, independent of the mesh size $h$ (maximum diameter of triangles $"T"$ of the triangulation $\mathcal{F}_{h}$) and time step $\sigma$, so that
      \begin{equation}\label{31a}
     \||v|\|_{5,4}\leq \widehat{C}.
     \end{equation}

     \begin{lemma}\label{l1}
       The bilinear operator $A(\cdot,\cdot)$ defined by equation $(\ref{17})$ is symmetric and positive definite on $V\times V$.
       \end{lemma}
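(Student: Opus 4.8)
The plan is to verify the two defining properties of $A$ directly from its definition $(\ref{17})$, namely $A(u,w)=\left(u,w\right)_{0}+\beta\,a(u,w)$, using that each of the two constituent forms is itself symmetric and that $\beta>0$.

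First I would establish symmetry. From the definition of the $L^{2}$-product in $(\ref{9})$ one has $\left(u,w\right)_{0}=\int_{\Omega}uw\,dx=\int_{\Omega}wu\,dx=\left(w,u\right)_{0}$, and from the definition of $a(\cdot,\cdot)$ in $(\ref{10})$ together with $(\ref{9})$ one has $a(u,w)=\left(\nabla u,\nabla w\right)_{\bar{0}}=\int_{\Omega}(\nabla u)^{t}\nabla w\,dx=\int_{\Omega}(\nabla w)^{t}\nabla u\,dx=a(w,u)$, since the integrand is the scalar dot product of the two gradients and is therefore commutative. As $A$ is the real linear combination, with coefficients $1$ and $\beta$, of these two symmetric forms, it follows at once that $A(u,w)=A(w,u)$, which is the claimed symmetry.

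Next I would prove positive definiteness by restricting $A$ to the diagonal. Setting $w=u$ gives
$$A(u,u)=\left(u,u\right)_{0}+\beta\,a(u,u)=\|u\|_{0}^{2}+\beta\|\nabla u\|_{\bar{0}}^{2}.$$
Because $\beta>0$, both summands are nonnegative, so $A(u,u)\geq\|u\|_{0}^{2}\geq 0$. For definiteness, if $A(u,u)=0$ then both nonnegative terms must vanish; in particular $\|u\|_{0}^{2}=0$, which forces $u=0$ in $L^{2}(\Omega)$ and hence $u=0$. Conversely $A(0,0)=0$, so $A(u,u)=0$ if and only if $u=0$, establishing positive definiteness on $V\times V$.

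Since the argument relies only on the elementary symmetry of the two inner-product forms and on the signs of the two nonnegative terms, I do not anticipate any genuine obstacle here. The single point worth flagging is that definiteness already follows from the $L^{2}$-term alone, the gradient term only reinforcing it; hence $\beta>0$ is needed not so much for positive definiteness as for the two-sided estimate $\min\{1,\beta\}\,\|u\|_{1}^{2}\leq A(u,u)\leq\max\{1,\beta\}\,\|u\|_{1}^{2}$, which is precisely what makes $\sqrt{A(\cdot,\cdot)}$ a norm equivalent to the $H^{1}$-norm invoked in the subsequent stability and error analysis.
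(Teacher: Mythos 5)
Your proof is correct and follows essentially the same route as the paper: symmetry from the commutativity of the two constituent inner products, and positive definiteness from $A(u,u)=\|u\|_{0}^{2}+\beta\|\nabla u\|_{\bar{0}}^{2}$ with both terms nonnegative. Your additional observation that definiteness already follows from the $L^{2}$-term alone, while $\beta>0$ is what secures the equivalence with the $H^{1}$-norm, is accurate and slightly more explicit than the paper's one-line justification.
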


       \begin{proof}
       Let $u,w\in V$, then it holds
       \begin{equation*}
        A(u,w)=\left(u,w\right)_{0}+\beta a(u,w)=\left(w,u\right)_{0}+\beta\left(\nabla u,\nabla w\right)_{\bar{0}}=\left(w,u\right)_{0}+\beta\left(\nabla w,\nabla u\right)_{\bar{0}}=\left(w,u\right)_{0}+\beta a(w,u)=A(w,u).
       \end{equation*}

       So, the operator $A(\cdot,\cdot)$ is symmetric. Additionally,
       \begin{equation*}
        A(u,u)=\left(u,u\right)_{0}+\beta a(u,u)=\|u\|_{0}^{2}+\beta\|\nabla u\|_{\bar{0}}^{2}>0,\text{\,\,\,\,for\,\,\,\,every\,\,\,\,}u\neq0.
       \end{equation*}
       Thus, $A(\cdot,\cdot)$ is positive definite. This ends the proof of Lemma $\ref{l1}$.
       \end{proof}

       It follows from Lemma $\ref{l1}$ that the bilinear operator $A(\cdot,\cdot)$ defines a scalar product on the Sobolev space $V\times V$. Let $\|\cdot\|_{\beta}$ be the corresponding norm. Thus
       \begin{equation}\label{30}
       \|u\|_{\beta}= \sqrt{\|u\|_{0}^{2}+\beta\|\nabla u\|_{\bar{0}}^{2}},\text{\,\,\,\,\,for\,\,\,\,any\,\,\,\,}u\in V.
       \end{equation}

       Using estimates $(\ref{29a})$ and $(\ref{30})$, it is not hard to observe that
     \begin{equation}\label{30a}
     \inf\{\|u-u_{h}\|_{\beta}^{2}:\text{\,\,}u_{h}\in V_{h}\}\leq C_{3}h^{8}\|u\|_{5}^{2},
     \end{equation}
     where $C_{3}>0$, is a constant that does not dependent on the time step $\sigma$ and space size $h$.

      \begin{lemma}\label{l2}
       The scalar product $A(\cdot,\cdot)$ defined by equation $(\ref{17})$ satisfies the following estimates. For every $u,w\in V$,
       \begin{equation}\label{31}
     A(u,w)\leq \sqrt{2}\|u\|_{\beta}\|w\|_{\beta},\text{\,\,\,\,and\,\,\,\,\,}A(u,u)\geq \min\{1,\text{\,}\beta\}\|u\|_{\beta}^{2}.
     \end{equation}
       \end{lemma}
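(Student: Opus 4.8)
The plan is to prove the two estimates separately, both reducing to the Cauchy--Schwarz inequality together with the definition $\|u\|_\beta=\sqrt{\|u\|_0^2+\beta\|\nabla u\|_{\bar{0}}^2}$ recorded in $(\ref{30})$.

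For the continuity bound I would start from the definition $(\ref{17})$, namely $A(u,w)=(u,w)_0+\beta a(u,w)=(u,w)_0+\beta(\nabla u,\nabla w)_{\bar{0}}$, and apply the Cauchy--Schwarz inequality to each of the two inner products, in $L^2(\Omega)$ and in $[L^2(\Omega)]^2$ respectively. This yields $A(u,w)\le \|u\|_0\,\|w\|_0+\beta\|\nabla u\|_{\bar{0}}\,\|\nabla w\|_{\bar{0}}$. Writing $p=\|u\|_0\|w\|_0$ and $q=\beta\|\nabla u\|_{\bar{0}}\|\nabla w\|_{\bar{0}}$, I would then invoke the elementary inequality $p+q\le\sqrt{2}\,\sqrt{p^2+q^2}$.

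The key step is to control $p^2+q^2$ by $\|u\|_\beta^2\|w\|_\beta^2$. Here I would expand the product $\|u\|_\beta^2\|w\|_\beta^2=(\|u\|_0^2+\beta\|\nabla u\|_{\bar{0}}^2)(\|w\|_0^2+\beta\|\nabla w\|_{\bar{0}}^2)$ and observe that it equals $p^2+q^2$ augmented by the two nonnegative cross terms $\beta\|u\|_0^2\|\nabla w\|_{\bar{0}}^2$ and $\beta\|\nabla u\|_{\bar{0}}^2\|w\|_0^2$; discarding these gives $p^2+q^2\le\|u\|_\beta^2\|w\|_\beta^2$, and combining with the previous estimate produces $A(u,w)\le\sqrt2\,\|u\|_\beta\|w\|_\beta$. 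I note in passing that the sharper discrete Cauchy--Schwarz bound $p+q\le\|u\|_\beta\|w\|_\beta$ already holds with constant $1$, so the factor $\sqrt2$ is not optimal, but it is all that is needed.

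For the coercivity estimate I would observe that by $(\ref{17})$ and $(\ref{30})$ one has exactly $A(u,u)=(u,u)_0+\beta a(u,u)=\|u\|_0^2+\beta\|\nabla u\|_{\bar{0}}^2=\|u\|_\beta^2$. Since $0<\min\{1,\beta\}\le1$, the difference $A(u,u)-\min\{1,\beta\}\|u\|_\beta^2=(1-\min\{1,\beta\})\|u\|_\beta^2$ is a nonnegative multiple of $\|u\|_\beta^2$, so $A(u,u)\ge\min\{1,\beta\}\|u\|_\beta^2$ follows at once; the same computation shows that $\min\{1,\beta\}$ is precisely the constant making $A$ coercive with respect to the unweighted $H^1$-type quantity $\|u\|_0^2+\|\nabla u\|_{\bar{0}}^2$, which is its intended use later in the analysis. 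There is no genuine obstacle in either part; the only point demanding a little care is the manipulation of the two quadratic products that produces the stated constant $\sqrt2$ in the first inequality.
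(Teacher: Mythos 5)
Your argument is correct and follows essentially the same route as the paper: Cauchy--Schwarz on each of the two inner products, then the elementary bound $p+q\le\sqrt{2}\sqrt{p^2+q^2}$ followed by discarding the nonnegative cross terms in $\|u\|_\beta^2\|w\|_\beta^2$, with the coercivity part reduced to the identity $A(u,u)=\|u\|_\beta^2$. Your side remark that the constant $\sqrt2$ can be improved to $1$ (since $A$ is itself an inner product) is accurate but not needed for the paper's purposes.
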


       \begin{proof}
       Suppose that $u,w\in V$, using the Cauchy-Schwarz inequality, it holds
       \begin{equation*}
        A(u,w)=\left(u,w\right)_{0}+\beta a(u,w)=\left(u,w\right)_{0}+\beta\left(\nabla u,\nabla w\right)_{\bar{0}}\leq \|u\|_{0}\|w\|_{0}+\beta\|\nabla u\|_{\bar{0}}\|\nabla w\|_{\bar{0}}=
       \end{equation*}
       \begin{equation*}
        [(\|u\|_{0}\|w\|_{0}+\beta\|\nabla u\|_{\bar{0}}\|\nabla w\|_{\bar{0}})^{2}]^{\frac{1}{2}}\leq [2(\|u\|_{0}^{2}\|w\|_{0}^{2}+\beta^{2}\|\nabla u\|_{\bar{0}}^{2}\|\nabla w\|_{\bar{0}}^{2})]^{\frac{1}{2}}\leq
       \end{equation*}
       \begin{equation*}
        \sqrt{2} [(\|u\|_{0}^{2}+\beta\|\nabla u\|_{\bar{0}}^{2})(\|w\|_{0}^{2}+\beta\|\nabla w\|_{\bar{0}}^{2})]^{\frac{1}{2}}=\sqrt{2}\|u\|_{\beta}\|w\|_{\beta}.
       \end{equation*}

        This ends the proof of the first estimate in Lemma $\ref{l2}$. The proof of the second inequality in this Lemma is obvious according to the definition of the norm $\|\cdot\|_{\beta}$.
       \end{proof}

     \begin{theorem}\label{t1} (Stability analysis and error estimates).
      Let $v\in H^{4}(0,T_{f};\text{\,} H^{5})$, be the analytical solution of the nonlinear Sobolev equation $(\ref{1})$, subjects to initial-boundary conditions $(\ref{2})$-$(\ref{3})$, and $v_{h}:\text{\,}[0,\text{\,}T_{f}]\rightarrow V_{h}$, be the numerical solution provided by the new algorithm $(\ref{28})$-$(\ref{29})$. Set $e_{h}^{n}=v_{h}^{n}-v^{n}$, be the error term at the discrete time $t_{n}$. Thus, it holds
      \begin{equation*}
      \|v_{h}^{n+1}\|_{\beta}^{2}+2\sigma\underset{\geq0}{\left(\underbrace{-4\gamma_{1}C_{p}+\frac{101}{72}\alpha}\right)}\underset{k=1}{\overset{n}\sum}
      a(e_{h}^{k+1},e_{h}^{k+1})\leq 2\widehat{C}^{2}+2\left[10C_{3}\|v^{1}\|_{5}^{2}h^{8}+\right.
       \end{equation*}
       \begin{equation}\label{s1}
       \left.T_{f}\left(3^{-1}C_{g}\tau^{2}+4\||(\mathcal{I}-\beta\Delta)v_{3t}|\|_{0,\infty}^{2}\right)\sigma^{4}\right]\exp\left(\frac{8T_{f}
       C_{p}(\beta+C_{p})(\tau\gamma_{2}\gamma_{3})^{2}}{\alpha\beta}\right),
       \end{equation}
       \begin{equation*}
      \|e_{h}^{n+1}\|_{\beta}^{2}+\sigma\underset{\geq0}{\left(\underbrace{-4\gamma_{1}C_{p}+\frac{101}{72}\alpha}\right)}\underset{k=1}{\overset{n}\sum}a(e_{h}^{k+1},e_{h}^{k+1})\leq \left[10C_{3}\|v^{1}\|_{5}^{2}h^{8}+\right.
       \end{equation*}
       \begin{equation}\label{s2}
       \left.T_{f}\left(3^{-1}C_{g}\tau^{2}+4\||(\mathcal{I}-\beta\Delta)v_{3t}|\|_{0,\infty}^{2}\right)\sigma^{4}\right]\exp\left(\frac{8T_{f}
       C_{p}(\beta+C_{p})(\tau\gamma_{2}\gamma_{3})^{2}}{\alpha\beta}\right),
       \end{equation}
       for $n=1,2,...,N-1$, where $C_{g}$, $C_{p}$, $C_{3}$, and $\gamma_{i}$, for $i=1,2,3$, are positive constants independent of the time step $\sigma$ and mesh grid $h$, $\alpha$ and $\beta$ are the physical parameters given in equation $(\ref{1})$ whereas $\widehat{C}$ is the constant defined in estimate $(\ref{31a})$. It's worth mentioning that estimate $(\ref{s1})$ suggests that the computational algorithm $(\ref{28})$-$(\ref{29})$ for computing an approximate solution of the initial-boundary value problem $(\ref{1})$-$(\ref{3})$ is unconditionally stable while inequality $(\ref{s2})$ indicates that the numerical approach is temporal second-order accurate and spatial fourth-order convergent.
       \end{theorem}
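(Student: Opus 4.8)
The plan is to derive an error equation, test it with the current error, exploit the symmetric positive definiteness of $A(\cdot,\cdot)$ through a discrete energy (G-stability) identity, control the nonlinear distributed-delay term by the assumed Lipschitz continuity together with a Poincar\'e inequality, and close the argument with a discrete Gronwall inequality. First I would subtract the fully discrete scheme \eqref{28} (written for $v_h^{n+1}$) from the consistency relation \eqref{27} satisfied by the exact solution $v^{n+1}$. Since both identities carry the same boundary contributions $\beta a_\Gamma(3\rho^{n+1}-4\rho^{n}+\rho^{n-1},u)$ and $2\alpha\sigma a_\Gamma(\rho^{n+1},u)$, these cancel, and with $e_h^{n}=v_h^{n}-v^{n}$ one is left with
\begin{equation*}
A(3e_h^{n+1}-4e_h^{n}+e_h^{n-1},u)+2\alpha\sigma\, a(e_h^{n+1},u)-2\sigma\bigl(F(v_h^{n+1})-F(v^{n+1}),u\bigr)_0=\frac{\sigma^{3}\tau}{6}\bigl(R_1^{n+1},u\bigr)_0+\frac{2\sigma^{3}}{3}\bigl((\mathcal{I}-\beta\Delta)v_{3t}(\epsilon_1),u\bigr)_0,
\end{equation*}
valid for all $u\in H^{1}(\Omega)$. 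Taking $u=e_h^{n+1}$ and using that $A(\cdot,\cdot)$ is a genuine inner product by Lemma \ref{l1}, I would apply the BDF2/G-stability identity
\begin{equation*}
2A(3e_h^{n+1}-4e_h^{n}+e_h^{n-1},e_h^{n+1})=\|e_h^{n+1}\|_A^2-\|e_h^{n}\|_A^2+\|2e_h^{n+1}-e_h^{n}\|_A^2-\|2e_h^{n}-e_h^{n-1}\|_A^2+\|e_h^{n+1}-2e_h^{n}+e_h^{n-1}\|_A^2,
\end{equation*}
where $\|w\|_A^2:=A(w,w)$; the last nonnegative remainder is discarded and the first four terms telescope upon summation in $n$.

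The dissipative term $2\alpha\sigma\, a(e_h^{n+1},e_h^{n+1})$ is kept on the left. The heart of the argument is the nonlinear delay term $2\sigma(F(v_h^{n+1})-F(v^{n+1}),e_h^{n+1})_0$. Expanding $F$ by the local Lipschitz continuity of $f$ in its third and fourth arguments and of $g$, the third-variable contribution is bounded by $\gamma_1\|e_h^{n+1}\|_0$, while the fourth-variable contribution, through the quadrature $\frac{\sigma}{2}\sum_{j=1}^{m}(\,\cdot\,)$ in \eqref{25}, is bounded by $\gamma_2\gamma_3\,\frac{\sigma}{2}\sum_{j=1}^{m}(\|e_h^{n+2-j}\|_0+\|e_h^{n+1-j}\|_0)$. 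A Cauchy--Schwarz step over the $m$-term sum injects the factor $\tau=m\sigma$, giving control of the delay part by $(\tau\gamma_2\gamma_3)^2$ times the past squared errors. Since $e_h^{n+1}$ vanishes on $\Gamma$, the Poincar\'e inequality $\|e_h^{n+1}\|_0^2\le C_p\, a(e_h^{n+1},e_h^{n+1})$ lets me absorb the current-error part of the nonlinearity into the dissipation; the leftover coefficient is precisely the quantity $-4\gamma_1C_p+\tfrac{101}{72}\alpha$, which is assumed nonnegative and is what remains in front of $\sigma\sum_k a(e_h^{k+1},e_h^{k+1})$ in \eqref{s1}--\eqref{s2}.

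The truncation terms are handled by Cauchy--Schwarz and Young's inequality with a weight of order $\sigma$: writing $\frac{\sigma^{3}\tau}{6}(R_1^{n+1},e_h^{n+1})_0\le \tfrac{1}{\lambda}\sigma^{5}\tau^{2}\|R_1^{n+1}\|_0^2+\lambda\sigma\|e_h^{n+1}\|_0^2$ and similarly for the $v_{3t}$ term, summing the $N\sim T_f/\sigma$ contributions produces the right-hand side $T_f(3^{-1}C_g\tau^{2}+4\||(\mathcal{I}-\beta\Delta)v_{3t}|\|_{0,\infty}^{2})\sigma^{4}$, where $\|R_1^{n+1}\|_0$ is bounded through \eqref{26}, \eqref{31a} and the smoothness of $g$ by a constant $C_g$, and $v_{3t}$ by \eqref{31a}. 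Summing the telescoped energy identity over $n=1,2,\dots$ and collecting the absorbed $\lambda\sigma\|e_h^{n+1}\|_0^2$ terms and the past-error delay terms, all proportional to $\sigma$ times earlier squared errors, casts the inequality into the form required by the discrete Gronwall lemma; its application yields the exponential factor $\exp\!\bigl(\tfrac{8T_fC_p(\beta+C_p)(\tau\gamma_2\gamma_3)^2}{\alpha\beta}\bigr)$, the constant $C_p(\beta+C_p)/(\alpha\beta)$ arising from the equivalence of $\|\cdot\|_0$, $\|\cdot\|_\beta$ and the $a$-seminorm. The starting error is estimated directly from the approximation property \eqref{30a} as $\|e_h^{1}\|_\beta^2\le C_3h^{8}\|v^{1}\|_5^2$, which together with the remaining constants gives the leading term $10C_3\|v^{1}\|_5^2h^{8}$ and establishes \eqref{s2}.

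Estimate \eqref{s1} then follows from \eqref{s2} by the Young inequality $\|v_h^{n+1}\|_\beta^2\le 2\|e_h^{n+1}\|_\beta^2+2\|v^{n+1}\|_\beta^2$ together with the regularity bound $\|v^{n+1}\|_\beta^2\le\||v|\|_{5,4}^2\le\widehat{C}^2$ from \eqref{31a}, which accounts for the additive $2\widehat{C}^2$ and the overall factor $2$. The main obstacle is the nonlinear distributed-delay term: one must expand $F(v_h^{n+1})-F(v^{n+1})$ carefully so that the quadrature sum over the $m$ delay levels is controlled uniformly in $m$ (whence the emergence of $\tau=m\sigma$ rather than an $m$-dependent blow-up), split the current-step error from the history via Poincar\'e so as to keep the dissipation coefficient nonnegative, and route only the history errors into the Gronwall argument; the remaining estimates are routine applications of Cauchy--Schwarz, Young's inequality and the discrete Gronwall lemma.
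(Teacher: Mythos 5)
Your proposal is correct and follows essentially the same route as the paper's proof: subtract $(\ref{27})$ from $(\ref{28})$ so the boundary terms cancel, test with the current error, telescope a BDF2-type energy identity for $A(3e_h^{n+1}-4e_h^{n}+e_h^{n-1},e_h^{n+1})$, absorb the Lipschitz-controlled delay term into the dissipation via the Poincar\'e inequality, treat the truncation terms by Cauchy--Schwarz and Young, and close with the discrete Gronwall lemma, the approximation property $(\ref{30a})$ for $e_h^{1}$, and a triangle inequality plus the regularity bound $(\ref{31a})$ for the stability estimate. The only departure is that you invoke the standard Dahlquist G-stability identity where the paper hand-derives the lower bound $(\ref{36})$; both produce the same telescoping structure and the same factor $5$ (hence the constant $10C_3$) in front of the initial-error term, so this is a cosmetic rather than substantive difference.
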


      \begin{proof}
       Subtracting equation $(\ref{27})$ from equation $(\ref{28})$, and rearranging terms we obtain
       \begin{equation*}
        3A(e_{h}^{n+1},\text{\,}u)+2\alpha\sigma a(e_{h}^{n+1},u)=A(4e_{h}^{n}-e_{h}^{n-1},\text{\,}u)+2\sigma\left(F(v_{h}^{n+1})-F(v^{n+1}),u\right)_{0}+
       \end{equation*}
       \begin{equation*}
        \frac{\sigma^{3}\tau}{6}\left(R_{1}^{n+1},u\right)_{0}+\frac{2\sigma^{3}}{3}\left((\mathcal{I}-\beta\Delta)v_{3t}(\epsilon_{1}(t)),u\right)_{0},\text{\,\,\,\,}\forall u\in H^{1}(\Omega).
       \end{equation*}
        Taking $u=2e_{h}^{n+1}$, and rearranging terms this equation becomes
       \begin{equation*}
        2A(3e_{h}^{n+1}-4e_{h}^{n}+e_{h}^{n-1},\text{\,}e_{h}^{n+1})+4\alpha\sigma a(e_{h}^{n+1},e_{h}^{n+1})=4\sigma\left(F(v_{h}^{n+1})-F(v^{n+1}),e_{h}^{n+1}\right)_{0}+
       \end{equation*}
       \begin{equation}\label{32a}
        \frac{\sigma^{3}\tau}{3}\left(R_{1}^{n+1},e_{h}^{n+1}\right)_{0}+\frac{4\sigma^{3}}{3}\left((\mathcal{I}-\beta\Delta)v_{3t}(\epsilon_{1}(t)),e_{h}^{n+1}\right)_{0}.
       \end{equation}

       It follows from equation $(\ref{25})$ that
       \begin{equation*}
       F(v_{h}^{n+1})-F(v^{n+1})=f\left(x,t_{n+1},v_{h}^{n+1},\frac{\sigma}{2}\overset{m}{\underset{j=1}\sum}\left(g(x,t_{n+1},t_{n+2-j},v_{h}^{n+2-j})+
       g(x,t_{n+1},t_{n+1-j},v_{h}^{n+1-j})\right)\right)-
       \end{equation*}
       \begin{equation*}
       f\left(x,t_{n+1},v^{n+1},\frac{\sigma}{2}\overset{m}{\underset{j=1}\sum}\left(g(x,t_{n+1},t_{n+2-j},v^{n+2-j})+g(x,t_{n+1},t_{n+1-j},v^{n+1-j})\right)\right).
       \end{equation*}

       Since the functions $f$ and $g$ satisfy a Lipschitz condition in the third and fourth variables, there exist positive constants $\gamma_{j}$, for $j=1,2,3$, independent of the mesh grid $h$ and time step $\sigma$, so that
       \begin{equation*}
        \|F(v_{h}^{n+1})-F(v^{n+1})\|_{0}\leq \gamma_{1}\|v_{h}^{n+1}-v^{n+1}\|_{0}+\frac{\gamma_{2}\sigma}{2}\overset{m}{\underset{j=1}\sum}\|g(x,t_{n+1},t_{n+1-j},v_{h}^{n+1-j})-
        g(x,t_{n+1},t_{n+1-j},v^{n+1-j})+
       \end{equation*}
       \begin{equation*}
        g(x,t_{n+1},t_{n+2-j},v_{h}^{n+2-j})-g(x,t_{n+1},t_{n+2-j},v^{n+2-j})\|_{0}.
       \end{equation*}

       Applying the triangular inequality to get
       \begin{equation*}
        \|F(v_{h}^{n+1})-F(v^{n+1})\|_{0}\leq \gamma_{1}\|e_{h}^{n+1}\|_{0}+\frac{\gamma_{2}\sigma}{2}\overset{m}{\underset{j=1}\sum}[\|g(x,t_{n+1},t_{n+1-j},v_{h}^{n+1-j})-
        g(x,t_{n+1},t_{n+1-j},v^{n+1-j})\|_{0}+
       \end{equation*}
       \begin{equation*}
        \|g(x,t_{n+1},t_{n+2-j},v_{h}^{n+2-j})-g(x,t_{n+1},t_{n+2-j},v^{n+2-j})\|_{0}]\leq \gamma_{1}\|e_{h}^{n+1}\|_{0}+\frac{\gamma_{2}\gamma_{3}\sigma}{2}\overset{m}{\underset{j=1}\sum}[\|v_{h}^{n+1-j}-v^{n+1-j}\|_{0}+
       \end{equation*}
       \begin{equation}\label{32}
        \|v_{h}^{n+2-j}-v^{n+2-j}\|_{0}]\leq \gamma_{1}\|e_{h}^{n+1}\|_{0}+\frac{\gamma_{2}\gamma_{3}\sigma}{2}\overset{m}{\underset{j=1}\sum}(\|e_{h}^{n+1-j}\|_{0}+
        \|e_{h}^{n+2-j}\|_{0}).
       \end{equation}

       Furthermore, since the operator $A(\cdot,\cdot)$ is a scalar product, straightforward calculations provide
       \begin{equation*}
       A(3e_{h}^{n+1}-4e_{h}^{n}+e_{h}^{n-1},\text{\,}e_{h}^{n+1}+e_{h}^{n-1})=3A(e_{h}^{n+1}-e_{h}^{n},\text{\,}e_{h}^{n+1}+e_{h}^{n-1})-
       A(e_{h}^{n}-e_{h}^{n-1},\text{\,}e_{h}^{n+1}+e_{h}^{n-1})=
       \end{equation*}
       \begin{equation*}
       3[A(e_{h}^{n+1}-e_{h}^{n},\text{\,}e_{h}^{n+1}-e_{h}^{n})+A(e_{h}^{n+1}-e_{h}^{n},\text{\,}e_{h}^{n}+e_{h}^{n-1})]-
       [A(e_{h}^{n}-e_{h}^{n-1},\text{\,}e_{h}^{n+1}+e_{h}^{n})-A(e_{h}^{n}-e_{h}^{n-1},\text{\,}e_{h}^{n}-e_{h}^{n-1})]=
       \end{equation*}
       \begin{equation}\label{33}
       3\|e_{h}^{n+1}-e_{h}^{n}\|_{\beta}^{2}+\|e_{h}^{n}-e_{h}^{n-1}\|_{\beta}^{2}+3A(e_{h}^{n+1}-e_{h}^{n},\text{\,}e_{h}^{n}+e_{h}^{n-1})-
       A(e_{h}^{n}-e_{h}^{n-1},\text{\,}e_{h}^{n+1}+e_{h}^{n}).
       \end{equation}

        \begin{equation*}
       A(3e_{h}^{n+1}-4e_{h}^{n}+e_{h}^{n-1},\text{\,}e_{h}^{n+1}-e_{h}^{n-1})=3A(e_{h}^{n+1}-e_{h}^{n},\text{\,}e_{h}^{n+1}-e_{h}^{n-1})-
       A(e_{h}^{n}-e_{h}^{n-1},\text{\,}e_{h}^{n+1}-e_{h}^{n-1})=
       \end{equation*}
       \begin{equation*}
       3[A(e_{h}^{n+1}-e_{h}^{n},\text{\,}e_{h}^{n+1}-e_{h}^{n})+A(e_{h}^{n+1}-e_{h}^{n},\text{\,}e_{h}^{n}-e_{h}^{n-1})]-
       [A(e_{h}^{n}-e_{h}^{n-1},\text{\,}e_{h}^{n+1}-e_{h}^{n})+A(e_{h}^{n}-e_{h}^{n-1},\text{\,}e_{h}^{n}-e_{h}^{n-1})]=
       \end{equation*}
       \begin{equation}\label{34}
       3\|e_{h}^{n+1}-e_{h}^{n}\|_{\beta}^{2}-\|e_{h}^{n}-e_{h}^{n-1}\|_{\beta}^{2}+3A(e_{h}^{n+1}-e_{h}^{n},\text{\,}e_{h}^{n}-e_{h}^{n-1})-
       A(e_{h}^{n}-e_{h}^{n-1},\text{\,}e_{h}^{n+1}-e_{h}^{n}).
       \end{equation}

       A combination of equations $(\ref{33})$ and $(\ref{34})$ gives
       \begin{equation}\label{35}
        2A(3e_{h}^{n+1}-4e_{h}^{n}+e_{h}^{n-1},\text{\,}e_{h}^{n+1})=6\|e_{h}^{n+1}-e_{h}^{n}\|_{\beta}^{2}+6A(e_{h}^{n+1}-e_{h}^{n},\text{\,}e_{h}^{n})
       -2A(e_{h}^{n}-e_{h}^{n-1},\text{\,}e_{h}^{n+1}).
       \end{equation}

       But, $6A(e_{h}^{n+1}-e_{h}^{n},\text{\,}e_{h}^{n})=-3(\|e_{h}^{n+1}-e_{h}^{n}\|_{\beta}^{2}+\|e_{h}^{n}\|_{\beta}^{2}-\|e_{h}^{n+1}\|_{\beta}^{2})$ and
       $-2A(e_{h}^{n}-e_{h}^{n-1},\text{\,}e_{h}^{n+1})=-2A(e_{h}^{n}-e_{h}^{n+1},\text{\,}e_{h}^{n+1})-2A(e_{h}^{n+1}-e_{h}^{n-1},\text{\,}e_{h}^{n+1})=
       \|e_{h}^{n+1}-e_{h}^{n}\|_{\beta}^{2}-\|e_{h}^{n}\|_{\beta}^{2}-\|e_{h}^{n+1}-e_{h}^{n-1}\|_{\beta}^{2}+\|e_{h}^{n-1}\|_{\beta}^{2}$. This fact, together with equation $(\ref{35})$ yield
       \begin{equation*}
        2A(3e_{h}^{n+1}-4e_{h}^{n}+e_{h}^{n-1},\text{\,}e_{h}^{n+1})=4\|e_{h}^{n+1}-e_{h}^{n}\|_{\beta}^{2}-\|e_{h}^{n+1}-e_{h}^{n-1}\|_{\beta}^{2}+
        3(\|e_{h}^{n+1}\|_{\beta}^{2}-\|e_{h}^{n}\|_{\beta}^{2})-(\|e_{h}^{n}\|_{\beta}^{2}-\|e_{h}^{n-1}\|_{\beta}^{2}).
       \end{equation*}

       Since $-\|e_{h}^{n+1}-e_{h}^{n-1}\|_{\beta}^{2}\geq -2(\|e_{h}^{n+1}-e_{h}^{n}\|_{\beta}^{2}+\|e_{h}^{n}-e_{h}^{n-1}\|_{\beta}^{2})$, this equation implies
       \begin{equation}\label{36}
        2A(3e_{h}^{n+1}-4e_{h}^{n}+e_{h}^{n-1},\text{\,}e_{h}^{n+1})\geq 2(\|e_{h}^{n+1}-e_{h}^{n}\|_{\beta}^{2}-\|e_{h}^{n}-e_{h}^{n-1}\|_{\beta}^{2})+
        3(\|e_{h}^{n+1}\|_{\beta}^{2}-\|e_{h}^{n}\|_{\beta}^{2})-(\|e_{h}^{n}\|_{\beta}^{2}-\|e_{h}^{n-1}\|_{\beta}^{2}).
       \end{equation}

       For an arbitrary $\epsilon>0$, the application of the Cauchy-Schwarz inequality along with estimate $(\ref{32})$ provide
       \begin{equation*}
       4\sigma\left(F(v_{h}^{n+1})-F(v^{n+1}),e_{h}^{n+1}\right)_{0}\leq 4\sigma\|e_{h}^{n+1}\|_{0}\|F(v_{h}^{n+1})-F(v^{n+1})\|_{0}\leq
       4\gamma_{1}\sigma\|e_{h}^{n+1}\|_{0}^{2}+\epsilon\sigma\|e_{h}^{n+1}\|_{0}^{2}+
       \end{equation*}
       \begin{equation}\label{37}
        \frac{(\gamma_{2}\gamma_{3})^{2}\sigma^{3}}{\epsilon}\left[\overset{m}{\underset{j=1}\sum}(\|e_{h}^{n+1-j}\|_{0}+
        \|e_{h}^{n+2-j}\|_{0})\right]^{2}.
       \end{equation}

       It is not difficult to show (by mathematical induction) that
       \begin{equation}\label{38}
       \left[\overset{m}{\underset{j=1}\sum}(\|e_{h}^{n+1-j}\|_{0}+\|e_{h}^{n+2-j}\|_{0})\right]^{2}\leq 2m \overset{m}{\underset{j=1}\sum}(\|e_{h}^{n+1-j}\|_{0}^{2}+\|e_{h}^{n+2-j}\|_{0}^{2})\leq 4m\overset{m}{\underset{j=0}\sum}\|e_{h}^{n+1-j}\|_{0}^{2}.
       \end{equation}

       Because $\tau=n\sigma$, substituting estimate $(\ref{38})$ into estimate $(\ref{37})$, to obtain
       \begin{equation}\label{39}
       4\sigma\left(F(v_{h}^{n+1})-F(v^{n+1}),e_{h}^{n+1}\right)_{0}\leq (4\gamma_{1}+\epsilon)\sigma\|e_{h}^{n+1}\|_{0}^{2}+
        \frac{4\tau(\gamma_{2}\gamma_{3})^{2}\sigma^{2}}{\epsilon}\overset{m}{\underset{j=0}\sum}\|e_{h}^{n+1-j}\|_{0}^{2}.
       \end{equation}

       But it follows from the Poincar\'{e}-Friedrich inequality that for every $u\in H^{1}(\Omega)$, $\|u\|_{0}^{2}\leq C_{p} \|\nabla u\|_{\bar{0}}^{2}=C_{p}a(u,u)$, where $C_{p}>0$, is a constant independent of the time step $\sigma$ and grid space $h$. Thus,
       \begin{equation}\label{39a}
       \|u\|_{0}^{2}\leq \frac{1}{2}(\|u\|_{0}^{2}+C_{p} \|\nabla u\|_{\bar{0}}^{2})\leq \frac{\beta+C_{p}}{2\beta}(\|u\|_{0}^{2}+\beta\|\nabla u\|_{\bar{0}}^{2})=
       \frac{\beta+C_{p}}{2\beta}\|u\|_{\beta}^{2},
       \end{equation}
       where $\|\cdot\|_{\beta}$ denotes the norm associated with the scalar product $A(\cdot,\cdot)$ and defined by equation $(\ref{30})$. Utilizing estimate $(\ref{39a})$, estimate $(\ref{39})$ implies
       \begin{equation}\label{40}
       -4\sigma\left(F(v_{h}^{n+1})-F(v^{n+1}),e_{h}^{n+1}\right)_{0}\leq C_{p}(4\gamma_{1}+\epsilon)\sigma a(e_{h}^{n+1},e_{h}^{n+1})+
        \frac{2\tau(\beta+C_{p})(\gamma_{2}\gamma_{3})^{2}\sigma^{2}}{\epsilon\beta}\overset{m}{\underset{j=0}\sum}\|e_{h}^{n+1-j}\|_{\beta}^{2}.
       \end{equation}
        Furthermore,
       \begin{equation}\label{41}
        \frac{\sigma^{3}\tau}{3}\left(R_{1}^{n+1},e_{h}^{n+1}\right)_{0}\leq
        \frac{\epsilon\sigma}{12}\|e_{h}^{n+1}\|_{0}^{2}+\frac{\sigma^{5}\tau^{2}}{3}\|R_{1}^{n+1}\|_{0}^{2}\leq \frac{C_{p}\epsilon\sigma}{12}a(e_{h}^{n+1},e_{h}^{n+1})+\frac{\sigma^{5}\tau^{2}}{3}\|R_{1}^{n+1}\|_{0}^{2}.
       \end{equation}

       Because the functions $f$ and $g$ are regular enough, there is a positive constant $C_{g}$ which does not depend on the mesh grid $h$ and time step $\sigma$, so that
        \begin{equation*}
       \|R_{1}^{n+1}\|_{0}^{2}=\int_{\Omega}\left(H(x,t_{n+1},\overline{\epsilon}_{2})\frac{\partial f}{\partial z}(x,t_{n+1},v^{n+1},
       \theta_{n}(x,\overline{\epsilon}_{2}))\right)dx\leq C_{g},
       \end{equation*}
       where the term $R_{1}^{n+1}$ is given by equation $(\ref{26})$. This fact, combined with estimate $(\ref{41})$ result in
       \begin{equation}\label{42}
        \frac{\sigma^{3}\tau}{3}\left(R_{1}^{n+1},e_{h}^{n+1}\right)_{0}\leq \frac{C_{p}\epsilon\sigma}{12}a(e_{h}^{n+1},e_{h}^{n+1})+\frac{C_{g}\sigma^{5}\tau^{2}}{3}.
       \end{equation}

       But $v\in H^{4}(0,T_{f};\text{\,}H^{5})$, so $\Delta v_{3t}\in H^{1}(0,T_{f};\text{\,}H^{3})$, which implies $(\mathcal{I}-\Delta)v_{3t}\in H^{1}(0,T_{f};\text{\,}H^{3})\subset L^{\infty}(0,T_{f};\text{\,}L^{2})$. Hence,
       \begin{equation*}
       \|(\mathcal{I}-\Delta)v_{3t}(\epsilon_{1}(t))\|_{0}\leq \||(\mathcal{I}-\Delta)v_{3t}|\|_{0,\infty},\text{\,\,\,\,\,\,\,}\forall t\in[0,\text{\,}T_{f}],
       \end{equation*}
       where $\||\cdot|\|_{0,\infty}$ represents the norm on the space $L^{\infty}(0,T_{f};\text{\,}L^{2})$. Using this, it holds
       \begin{equation*}
        \frac{4\sigma^{3}}{3}\left((\mathcal{I}-\beta\Delta)v_{3t}(\epsilon_{1}(t)),e_{h}^{n+1}\right)_{0}\leq \frac{4\sigma^{3}}{3}\|(\mathcal{I}-\beta\Delta)v_{3t}(\epsilon_{1}(t))\|_{0}\|e_{h}^{n+1}\|_{0}\leq \frac{\epsilon\sigma}{9}\|e_{h}^{n+1}\|_{0}^{2}+
       \end{equation*}
        \begin{equation}\label{43}
       4\sigma^{5}\||(\mathcal{I}-\Delta)v_{3t}|\|_{0,\infty}^{2}\leq \frac{\epsilon C_{p}\sigma}{9}a(e_{h}^{n+1},e_{h}^{n+1})+ 4\sigma^{5}\||(\mathcal{I}-\Delta)v_{3t}|\|_{0,\infty}^{2}.
       \end{equation}

       Substituting estimates $(\ref{36})$, $(\ref{40})$, $(\ref{42})$ and $(\ref{43})$ into equation $(\ref{32a})$ and rearranging terms, it is not difficult to observe that
       \begin{equation*}
        3(\|e_{h}^{n+1}\|_{\beta}^{2}-\|e_{h}^{n}\|_{\beta}^{2})-(\|e_{h}^{n}\|_{\beta}^{2}-\|e_{h}^{n-1}\|_{\beta}^{2})+2(\|e_{h}^{n+1}-e_{h}^{n}\|_{\beta}^{2}-
        \|e_{h}^{n}-e_{h}^{n-1}\|_{\beta}^{2})+4\alpha\sigma a(e_{h}^{n+1},e_{h}^{n+1})\leq
       \end{equation*}
       \begin{equation}\label{44}
       C_{p}\left(4\gamma_{1}+\frac{43}{36}\epsilon\right)\sigma a(e_{h}^{n+1},e_{h}^{n+1})+
        \frac{2\tau(\beta+C_{p})(\gamma_{2}\gamma_{3})^{2}\sigma^{2}}{\epsilon\beta}\overset{m}{\underset{j=0}\sum}\|e_{h}^{n+1-j}\|_{\beta}^{2}+
       \left(\frac{C_{g}\tau^{2}}{3}+4\||(\mathcal{I}-\Delta)v_{3t}|\|_{0,\infty}^{2}\right)\sigma^{5}.
       \end{equation}

       Without loss of generality, we assume that the physical parameters $\alpha>0$ is chosen so that $\alpha>2\gamma_{1}C_{p}$. Set $\epsilon=\frac{\alpha}{C_{p}}$, so
       $\left(4\gamma_{1}+\frac{43}{36}\epsilon\right)C_{p}<4\alpha$. This fact, together with estimate $(\ref{44})$ imply
       \begin{equation*}
        3(\|e_{h}^{n+1}\|_{\beta}^{2}-\|e_{h}^{n}\|_{\beta}^{2})-(\|e_{h}^{n}\|_{\beta}^{2}-\|e_{h}^{n-1}\|_{\beta}^{2})+2(\|e_{h}^{n+1}-e_{h}^{n}\|_{\beta}^{2}-
        \|e_{h}^{n}-e_{h}^{n-1}\|_{\beta}^{2})+\underset{\geq0}{\left(\underbrace{-4\gamma_{1}C_{p}+\frac{101}{36}\alpha}\right)}\sigma a(e_{h}^{n+1},e_{h}^{n+1})\leq
       \end{equation*}
       \begin{equation*}
      \frac{2\tau C_{p}(\beta+C_{p})(\gamma_{2}\gamma_{3})^{2}\sigma^{2}}{\alpha\beta}\overset{m}{\underset{j=0}\sum}\|e_{h}^{n+1-j}\|_{\beta}^{2}+
       \left(\frac{C_{g}\tau^{2}}{3}+4\||(\mathcal{I}-\Delta)v_{3t}|\|_{0,\infty}^{2}\right)\sigma^{5}.
       \end{equation*}

       Summing this up from $k=1,...,n$, and rearranging terms, this provides
       \begin{equation*}
        3\|e_{h}^{n+1}\|_{\beta}^{2}-\|e_{h}^{n}\|_{\beta}^{2}+2\|e_{h}^{n+1}-e_{h}^{n}\|_{\beta}^{2}+\underset{\geq0}{\left(\underbrace{-4\gamma_{1}C_{p}+\frac{101}{36}\alpha}\right)}\sigma \overset{n}{\underset{k=1}\sum}a(e_{h}^{k+1},e_{h}^{k+1})\leq 3\|e_{h}^{1}\|_{\beta}^{2}-\|e_{h}^{0}\|_{\beta}^{2}+2\|e_{h}^{1}-e_{h}^{0}\|_{\beta}^{2}+
       \end{equation*}
       \begin{equation}\label{45}
       \frac{2\tau C_{p}(\beta+C_{p})(\gamma_{2}\gamma_{3})^{2}\sigma^{2}}{\alpha\beta}\overset{n}{\underset{k=1}\sum}\overset{m}{\underset{j=0}\sum}
       \|e_{h}^{k+1-j}\|_{\beta}^{2}+ n\left(\frac{C_{g}\tau^{2}}{3}+4\||(\mathcal{I}-\Delta)v_{3t}|\|_{0,\infty}^{2}\right)\sigma^{5}.
       \end{equation}

       It follows from the initial conditions $(\ref{2})$ and $(\ref{29})$ that: $v^{k}=u_{0}^{k}$ and $v_{h}^{k}=u_{0}^{k}$, for $k=-m,-m+1,...,0$. So, $e_{h}^{k}=0$, for $k=-m,-m+1,...,0$. Utilizing this fact, it is easy to see that
       \begin{equation*}
       \overset{n}{\underset{k=1}\sum}\overset{m}{\underset{j=0}\sum}\|e_{h}^{k+1-j}\|_{\beta}^{2}=\overset{n}{\underset{k=1}\sum}\|e_{h}^{k+1}\|_{\beta}^{2}+
       \overset{n}{\underset{k=1}\sum}\|e_{h}^{k}\|_{\beta}^{2}+\overset{n}{\underset{k=1}\sum}\|e_{h}^{k-1}\|_{\beta}^{2}+...+
       \overset{n}{\underset{k=1}\sum}\|e_{h}^{k+1-m}\|_{\beta}^{2}=\overset{n+1}{\underset{k=2}\sum}\|e_{h}^{k}\|_{\beta}^{2}+
       \end{equation*}
       \begin{equation}\label{46}
       \overset{n}{\underset{k=1}\sum}\|e_{h}^{k}\|_{\beta}^{2}+ \overset{n-1}{\underset{k=1}\sum}\|e_{h}^{k}\|_{\beta}^{2}+...+
       \overset{n+1-m}{\underset{k=1}\sum}\|e_{h}^{k}\|_{\beta}^{2}\leq (m+1)\overset{n+1}{\underset{k=1}\sum}\|e_{h}^{k}\|_{\beta}^{2},
       \end{equation}
       where the sum equals zero if the upper summation index is nonpositive. But
       \begin{equation}\label{46a}
       -\|e_{h}^{n}\|_{\beta}^{2}=-\|e_{h}^{n}-e_{h}^{n+1}+e_{h}^{n+1}\|_{\beta}^{2}\geq -2(\|e_{h}^{n+1}-e_{h}^{n}\|_{\beta}^{2}+\|e_{h}^{n+1}\|_{\beta}^{2}).
       \end{equation}

       Plugging inequalities $(\ref{46})$ and $(\ref{46a})$ into estimate $(\ref{45})$, this implies
       \begin{equation*}
        \|e_{h}^{n+1}\|_{\beta}^{2}+\underset{\geq0}{\left(\underbrace{-4\gamma_{1}C_{p}+\frac{101}{36}\alpha}\right)}\sigma \overset{n}{\underset{k=1}\sum}a(e_{h}^{k+1},e_{h}^{k+1})\leq 5\|e_{h}^{1}\|_{\beta}^{2}+
       \end{equation*}
       \begin{equation}\label{47}
       \frac{2\tau C_{p}(\beta+C_{p})(\gamma_{2}\gamma_{3})^{2}\sigma^{2}}{\alpha\beta}(m+1)\overset{n+1}{\underset{k=1}\sum}
       \|e_{h}^{k}\|_{\beta}^{2}+ n\left(\frac{C_{g}\tau^{2}}{3}+4\||(\mathcal{I}-\Delta)v_{3t}|\|_{0,\infty}^{2}\right)\sigma^{5}.
       \end{equation}

       But, $\sigma=\frac{\tau}{m}$ which is assumed to satisfy $\sigma=\frac{T_{f}}{N}$, then $(m+1)\sigma\leq2\tau$ and $n\sigma^{5}\leq T_{f}\sigma^{4}$. For small values of the time step $\sigma$, substituting this into estimate $(\ref{47})$ and applying the Gronwall inequality, this results in
       \begin{equation*}
        \|e_{h}^{n+1}\|_{\beta}^{2}+\underset{\geq0}{\left(\underbrace{-4\gamma_{1}C_{p}+\frac{101}{36}\alpha}\right)}\sigma \overset{n}{\underset{k=1}\sum}a(e_{h}^{k+1},e_{h}^{k+1})\leq \left[5\|e_{h}^{1}\|_{\beta}^{2}+ \left(\frac{C_{g}\tau^{2}}{3}+4\||(\mathcal{I}-\Delta)v_{3t}|\|_{0,\infty}^{2}\right)\sigma^{4}\right]*
       \end{equation*}
       \begin{equation}\label{48}
       \exp\left(\frac{8T_{f}C_{p}(\beta+C_{p})(\tau\gamma_{2}\gamma_{3})^{2}}{\alpha\beta}\right),
       \end{equation}
       where "*" means the usual multiplication in $\mathbb{R}$. Indeed: $(n+1)\sigma\leq2T_{f}$. Now, let $u\in V_{h}$ be an arbitrary function, where $V_{h}$ is the finite element space defined by equation $(\ref{5})$. The application of triangular inequality gives
        \begin{equation}\label{48a}
       \|e_{h}^{1}\|_{\beta}^{2}=\|(v_{h}^{1}-u)+(u-v^{1})\|_{\beta}^{2}\leq 2[\|v_{h}^{1}-u\|_{\beta}^{2}+\|u-v^{1}\|_{\beta}^{2}].
       \end{equation}

        Since $\inf\{\|v_{h}^{1}-u\|_{\beta}^{2}:\text{\,\,}u\in V_{h}\}=0$, applying the infimum over $u\in V_{h}$, in the left and right sides of estimate $(\ref{48a})$ and utilizing inequality $(\ref{30a})$, to get
       \begin{equation*}
       \|e_{h}^{1}\|_{\beta}^{2}\leq 2\inf\{\|u-v^{1}\|_{\beta}^{2}:\text{\,\,}u\in V_{h}\}\leq 2C_{3}h^{8}\|v^{1}\|_{5}^{2}.
       \end{equation*}

       Substituting this into estimate $(\ref{48})$, we obtain
       \begin{equation*}
        \|e_{h}^{n+1}\|_{\beta}^{2}+\underset{\geq0}{\left(\underbrace{-4\gamma_{1}C_{p}+\frac{101}{36}\alpha}\right)}\sigma \overset{n}{\underset{k=1}\sum}a(e_{h}^{k+1},e_{h}^{k+1})\leq \left[10C_{3}\|v^{1}\|_{5}^{2}h^{8}+ \left(\frac{C_{g}\tau^{2}}{3}+4\||(\mathcal{I}-\Delta)v_{3t}|\|_{0,\infty}^{2}\right)\sigma^{4}\right]*
       \end{equation*}
       \begin{equation}\label{49}
       \exp\left(\frac{8T_{f}C_{p}(\beta+C_{p})(\tau\gamma_{2}\gamma_{3})^{2}}{\alpha\beta}\right).
       \end{equation}

       Since $\|v_{h}^{n+1}\|_{\beta}-\|v^{n+1}\|_{\beta}\leq \|e_{h}^{n+1}\|_{\beta}$, performing straightforward calculations, inequality $(\ref{49})$ becomes
       \begin{equation*}
        \|v_{h}^{n+1}\|_{\beta}^{2}+2\underset{\geq0}{\left(\underbrace{-4\gamma_{1}C_{p}+\frac{101}{36}\alpha}\right)}\sigma \overset{n}{\underset{k=1}\sum}a(e_{h}^{k+1},e_{h}^{k+1})\leq 2\|v^{n+1}\|_{\beta}^{2}+2\left[10C_{3}\|v^{1}\|_{5}^{2}h^{8}+\right.
       \end{equation*}
       \begin{equation*}
       \left. \left(\frac{C_{g}\tau^{2}}{3}+4\||(\mathcal{I}-\Delta)v_{3t}|\|_{0,\infty}^{2}\right)\sigma^{4}\right]
       \exp\left(\frac{8T_{f}C_{p}(\beta+C_{p})(\tau\gamma_{2}\gamma_{3})^{2}}{\alpha\beta}\right),
       \end{equation*}
       for $n=1,2,...,N-1$. The proof of Theorem $\ref{t1}$ is completed thank to estimate $(\ref{31a})$.
       \end{proof}

      \section{Numerical experiments}\label{sec4}
       This Section presents some numerical results to demonstrate the applicability and efficiency of the proposed high-order computational technique $(\ref{28})$-$(\ref{29})$ for solving the nonlinear Sobolev equation $(\ref{1})$, subjects to initial and boundary conditions $(\ref{2})$ and $(\ref{3})$, respectively. To check the unconditional stability and accuracy of the the new algorithm we assume that the time step $\sigma=2^{-m}$, for $m=4,5,...,8$, and the space size $h\in\{2^{-m},\text{\,}m=2,3,...,6\}$, where $h=\max\{d(T),\text{\,\,}T\in\mathcal{T}_{h}\}$ and $\mathcal{T}_{h}$ is the triangulation of the domain $\overline{\Omega}$. Furthermore, we take the parameters $\alpha=\beta=1$, and we compute the error $e_{\theta}=v_{\theta}-v$, using the strong norm, $\|\cdot\|_{1,\infty}$. Specifically,
       \begin{equation*}
        \||e_{\theta}|\|_{1,\infty}=\underset{1\leq n\leq N}{\max}\|e_{\theta}^{n}\|_{1},
       \end{equation*}
       where $\theta\in\{h,\text{\,}\sigma\}$. In addition, the convergence order, $R(h)$, in space of the proposed strategy is estimated using the formula
       \begin{equation*}
        R(h)=\frac{\log\left(\frac{\||e_{2h}|\|_{1,\infty}}{\||e_{h}|\|_{1,\infty}}\right)}{\log(2)},
       \end{equation*}
        where $e_{2h}$ and $e_{h}$ are the spatial errors associated with the grid sizes $2h$ and $h$, respectively, while the temporal convergence rate, $R(\sigma)$, is calculated as follows
       \begin{equation*}
        R(\sigma)=\frac{\log\left(\frac{\||e_{2\sigma}|\|_{1,\infty}}{\||e_{\sigma}|\|_{1,\infty}}\right)}{\log(2)},
       \end{equation*}
        where $e_{\sigma}$ and $e_{2\sigma}$ denote the errors in time corresponding to time steps $\sigma$ and $2\sigma$, respectively. Finally, the numerical computations are carried out with the help of MATLAB R$2007b$.\\

       $\bullet$\textbf{ Example 1}. Let $\overline{\Omega}=[0,\text{\,}1]$, and $T_{f}=3$. We consider the following one-dimensional distributed delay nonlinear Sobolev
       model defined in \cite{tr} as
       \begin{equation*}
        \left\{
          \begin{array}{ll}
            v_{t}(x,t)-v_{xxt}(x,t)-v_{xx}(x,t)=v^{2}(x,t)-2\int_{t-1}^{t}v(x,s)ds+f(x,t), & \hbox{for $(x,t)\in(0,\text{\,}1)\times(0,\text{\,}3]$,} \\
            \text{\,}\\
            v(x,t)=e^{-x}\cos(\pi t), & \hbox{for $(x,t)\in[0,\text{\,}1]\times[-1,\text{\,}0]$,} \\
            \text{\,}\\
            v(0,t)=\cos(\pi t),\text{\,\,\,}v(1,t)=e^{-1}\cos(\pi t), & \hbox{for $t\in[0,\text{\,}3]$,} \\
          \end{array}
        \right.
       \end{equation*}
       where $f(x,t)=[\frac{4}{\pi}\sin(\pi t)-(1+e^{-x}\cos(\pi t))\cos(\pi t)]e^{-x}$. The analytical solution is given by $v(x,t)=e^{-x}\cos(\pi t)$. It is easy to see
       that the exact solution $v\in H^{4}(0,3;\text{\,}H^{5})$ and the function $f$ is regular enough.\\

       \textbf{Table $1$}$\label{T1}$.
          \begin{equation*}
          \begin{array}{c c}
          \text{\,developed approach,\,\,where\,\,}\sigma=2^{-7},\text{\,\,} & \text{\,new method,\,\,with\,\,}h=2^{-5},\text{\,\,}\\
           \begin{tabular}{cccc}
            \hline
            $h$ &  $\||e_{h}|\|_{1,\infty}$ & R($h$) & CPU (s)\\
             \hline
            $2^{-2}$ &  $2.1864\times10^{-3}$ &   --  &  0.7293   \\

            $2^{-3}$ &  $1.4855\times10^{-4}$ & 3.8795 & 1.7168  \\

            $2^{-4}$ &  $9.3600\times10^{-6}$ & 3.9883 & 4.3761   \\

            $2^{-5}$ &  $6.1370\times10^{-7}$ & 3.9309 & 12.2188 \\

            $2^{-6}$ &  $3.8314\times10^{-8}$ & 4.0016 & 36.1244 \\
            \hline
          \end{tabular} & \begin{tabular}{cccc}
            \hline
            $\sigma$ &  $\||e_{\sigma}|\|_{1,\infty}$ & R($\sigma$) & CPU (s)\\
             \hline
            $2^{-4}$ &   $1.0361\times10^{-2}$ &   -- &  0.6871 \\

            $2^{-5}$ &   $2.6403\times10^{-3}$ & 1.9724 & 1.5521  \\

            $2^{-6}$ &   $7.1668\times10^{-4}$ & 1.8813 & 3.8444 \\

            $2^{-7}$ &   $1.7939\times10^{-4}$ & 1.9982 & 10.3393  \\

            $2^{-8}$ &   $4.4270\times10^{-5}$ & 2.0187 & 29.3895 \\
            \hline
          \end{tabular}
          \end{array}
          \end{equation*}

          \begin{equation*}
          \begin{array}{c c}
          \text{\,method\,developed\,in\,\,}\cite{tr},\text{\,\,}\sigma=2^{-15},\text{\,\,} &
           \text{\,method\,\,proposed\,\,in\,\,}\cite{tr},\text{\,\,}h=2^{-10},\text{\,\,}\\
          \begin{tabular}{ccc}
            \hline
            $h$ &  $\||e_{h}|\|_{1,\infty}$ & R($h$) \\
             \hline
            $2^{-1}$ &   $1.7437\times10^{-5}$ &   --- \\

            $2^{-2}$ &   $1.1395\times10^{-6}$ & 3.9357 \\

            $2^{-3}$ &   $7.1520\times10^{-8}$ & 3.9939  \\

            $2^{-4}$ &   $4.4719\times10^{-9}$ & 3.9994 \\

            $2^{-5}$ &   $2.8000\times10^{-10}$ & 3.9974 \\
            \hline
          \end{tabular} & \begin{tabular}{ccc}
            \hline
            $\Delta t$ &  $\||e_{\Delta t}|\|_{1,\infty}$ & R($\Delta t$) \\
             \hline
            $2^{-2}$ &   $9.9431\times10^{-3}$ &   -- - \\

            $2^{-3}$ &   $2.4995\times10^{-3}$ & 1.9920  \\

            $2^{-4}$ &   $6.2384\times10^{-4}$ & 2.0024 \\

            $2^{-5}$ &   $1.5584\times10^{-4}$ & 2.0011  \\

            $2^{-6}$ &   $3.8928\times10^{-5}$ & 2.0012 \\
            \hline
          \end{tabular}
          \end{array}
          \end{equation*}

          \textbf{Table $2$}$\label{T2}$.
          \begin{equation*}
          \begin{array}{c c}
          \text{\,proposed algorithm,\,\,where\,\,}\sigma=2^{-8} & \text{\,new technique,\,\,with\,\,}h=2^{-6}\\
           \begin{tabular}{cccc}
            \hline
            $h$ &  $\||e_{h}|\|_{0,\infty}$ & R($h$) & CPU (s)\\
             \hline
            $2^{-2}$ &  $5.1208\times10^{-3}$ &   --  &  0.6973   \\

            $2^{-3}$ &  $3.2261\times10^{-4}$ & 3.9885 & 1.5661  \\

            $2^{-4}$ &  $2.0521\times10^{-5}$ & 3.9746 & 3.8209   \\

            $2^{-5}$ &  $1.2799\times10^{-6}$ & 4.0030 & 10.2874 \\

            $2^{-6}$ &  $7.7419\times10^{-8}$ & 4.0472 & 28.9548 \\
            \hline
          \end{tabular} & \begin{tabular}{cccc}
            \hline
            $\sigma$ &  $\||e_{\sigma}|\|_{0,\infty}$ & R($\sigma$) & CPU (s)\\
             \hline
            $2^{-4}$ &   $3.1176\times10^{-3}$ &   -- &  0.5689  \\

            $2^{-5}$ &   $7.8449\times10^{-4}$ & 1.9906 & 1.2455 \\

            $2^{-6}$ &   $1.9612\times10^{-4}$ & 2.0000 & 2.9874  \\

            $2^{-7}$ &   $4.9608\times10^{-5}$ & 1.9831 & 7.5502  \\

            $2^{-8}$ &   $1.2334\times10^{-5}$ & 2.0079 & 20.7924  \\
            \hline
          \end{tabular}
          \end{array}
          \end{equation*}

          \begin{equation*}
          \begin{array}{c c}
          \text{\,method\,developed\,in\,\,}\cite{tr},\text{\,\,}\sigma=2^{-15},\text{\,\,}& \text{\,method\,\,proposed\,\,in\,\,}\cite{tr},\text{\,\,}h=2^{-10}\\
          \begin{tabular}{ccc}
            \hline
            $h$ &  $\||e_{h}|\|_{0,\infty}$ & R($h$) \\
             \hline
            $2^{-1}$ &   $2.4660\times10^{-5}$ &   --- \\

            $2^{-2}$ &   $1.5511\times10^{-6}$ & 3.9908   \\

            $2^{-3}$ &   $9.7113\times10^{-8}$ & 3.9976   \\

            $2^{-4}$ &   $6.1116\times10^{-9}$ & 3.9900   \\

            $2^{-5}$ &   $3.8350\times10^{-10}$ & 3.9943 \\
            \hline
          \end{tabular} & \begin{tabular}{ccc}
            \hline
            $\Delta t$ &  $\||e_{\Delta t}|\|_{0,\infty}$ & R($\Delta t$) \\
             \hline
            $2^{-2}$ &   $1.3692\times10^{-2}$ &   --- \\

            $2^{-3}$ &   $3.4398\times10^{-3}$ & 1.9929  \\

            $2^{-4}$ &   $8.5869\times10^{-4}$ & 2.0021 \\

            $2^{-5}$ &   $2.1443\times10^{-4}$ & 2.0016 \\

            $2^{-6}$ &   $5.3562\times10^{-5}$ & 2.0012  \\
            \hline
          \end{tabular}
          \end{array}
          \end{equation*}
        Tables $1$-$2$ indicate that the new approach $(\ref{28})$-$(\ref{29})$ is more efficient than the method discussed in \cite{tr}.
        \text{\,}\\
          \text{\,}\\
        $\bullet$\textbf{ Example 2}. Suppose that $\Omega=(0,\text{\,}1)^{2}$, and $[0,\text{\,}T_{f}]=[0,\text{\,}1]$. We consider the following two-dimensional
        distributed delay nonlinear Sobolev problem defined in \cite{tr} as
       \begin{equation*}
        \left\{
          \begin{array}{ll}
            v_{t}-\Delta v_{t}-\Delta v=\frac{1}{2}v^{2}+\sin(v)+\int_{t-1}^{t}v(x,y,s)ds+g(x,y,t), & \hbox{on $\Omega\times(0,\text{\,}1]$,} \\
            \text{\,}\\
            v(x,y,t)=e^{-\frac{t}{2}}\sin(\pi x)\sin(\pi y), & \hbox{for $(x,y,t)\in\overline{\Omega}\times[-1,\text{\,}0]$,} \\
            \text{\,}\\
            v(0,y,t)=v(1,y,t)=v(x,0,t)=v(x,1,t)=0, & \hbox{for $t\in[0,\text{\,}1]$,} \\
          \end{array}
        \right.
       \end{equation*}
       where $g(x,y,t)=[\frac{3}{2}-2e^{\frac{1}{2}}+\pi^{2}-\frac{1}{2}e^{-\frac{t}{2}}\sin(\pi x)\sin(\pi y)]e^{-\frac{t}{2}}\sin(\pi x)\sin(\pi y)-
      \sin\left(e^{-\frac{t}{2}}\sin(\pi x)\sin(\pi y)\right)$. The exact solution is defined as: $v(x,y,t)=e^{-\frac{t}{2}}\sin(\pi x)\sin(\pi y)$. Additionally, it is easy
      to see that the exact solution $v\in H^{4}(0,1;\text{\,}H^{5}(\Omega))$ and the function $g$ is smooth enough.\\

       \textbf{Table $3$}$\label{T3}$.
          \begin{equation*}
          \begin{array}{c c}
          \text{\,developed approach,\,\,where\,\,}\sigma=2^{-5},\text{\,\,} & \text{\,new method,\,\,with\,\,}h=2^{-4},\text{\,\,}\\
           \begin{tabular}{cccc}
            \hline
            $h$ &  $\||e_{h}|\|_{1,\infty}$ & R($h$) & CPU (s)\\
             \hline
            $2^{-2}$ &  $4.0012\times10^{-2}$ &   --  &  0.9548   \\

            $2^{-3}$ &  $2.5218\times10^{-3}$ & 3.9879 & 2.2391  \\

            $2^{-4}$ &  $1.5729\times10^{-4}$ & 4.0030 & 5.7370  \\

            $2^{-5}$ &  $9.8231\times10^{-6}$ & 4.0011 & 15.845 \\

            $2^{-6}$ &  $6.0507\times10^{-7}$ & 4.0210 & 47.2418 \\
            \hline
          \end{tabular} & \begin{tabular}{cccc}
            \hline
            $\sigma$ &  $\||e_{\sigma}|\|_{1,\infty}$ & R($\sigma$) & CPU (s)\\
             \hline
            $2^{-4}$ &   $1.3497\times10^{-3}$ &   -- &  0.9296 \\

            $2^{-5}$ &   $3.4428\times10^{-4}$ & 1.9710 & 2.1134  \\

            $2^{-6}$ &   $9.2254\times10^{-5}$ & 1.8999 & 5.1526  \\

            $2^{-7}$ &   $2.3198\times10^{-5}$ & 1.9916 & 13.8465  \\

            $2^{-8}$ &   $5.7947\times10^{-6}$ & 2.0012 & 39.4514  \\
            \hline
          \end{tabular}
          \end{array}
          \end{equation*}

          \begin{equation*}
          \begin{array}{c c}
          \text{\,method\,developed\,in\,\,}\cite{tr},\text{\,\,}\sigma=2^{-12},\text{\,\,} &
           \text{\,method\,\,proposed\,\,in\,\,}\cite{tr},\text{\,\,}h=2^{-6},\text{\,\,}\\
          \begin{tabular}{ccc}
            \hline
            $h$ &  $\||e_{h}|\|_{1,\infty}$ & R($h$) \\
             \hline
            $2^{-1}$ &   $1.4027\times10^{-2}$ &   --- \\

            $2^{-2}$ &   $8.7271\times10^{-4}$ & 4.0066  \\

            $2^{-3}$ &   $5.4527\times10^{-5}$ & 4.0005  \\

            $2^{-4}$ &   $3.4056\times10^{-6}$ & 4.0010 \\

            $2^{-5}$ &   $2.1027\times10^{-7}$ & 4.0175 \\
            \hline
          \end{tabular} & \begin{tabular}{ccc}
            \hline
            $\Delta t$ &  $\||e_{\Delta t}|\|_{1,\infty}$ & R($\Delta t$)\\
             \hline
            $2^{-3}$ &   $5.7944\times10^{-4}$ &   --- \\

            $2^{-4}$ &   $1.5246\times10^{-4}$ & 1.9262 \\

            $2^{-5}$ &   $3.9037\times10^{-5}$ & 1.9655  \\

            $2^{-6}$ &   $9.8648\times10^{-6}$ & 1.9845 \\

            $2^{-7}$ &   $2.4706\times10^{-6}$ & 1.9974 \\
            \hline
          \end{tabular}
          \end{array}
          \end{equation*}

          \textbf{Table $4$}$\label{T4}$.
          \begin{equation*}
          \begin{array}{c c}
          \text{\,proposed algorithm,\,\,where\,\,}\sigma=2^{-6} & \text{\,new technique,\,\,with\,\,}h=2^{-5}\\
           \begin{tabular}{cccc}
            \hline
            $h$ &  $\||e_{h}|\|_{0,\infty}$ & R($h$) & CPU (s)\\
             \hline
            $2^{-2}$ &  $3.5009\times10^{-2}$ &   --  &  0.9341   \\

            $2^{-3}$ &  $2.1563\times10^{-3}$ & 4.0211 & 2.1311  \\

            $2^{-4}$ &  $1.3477\times10^{-4}$ & 4.0000 & 5.1544   \\

            $2^{-5}$ &  $8.2389\times10^{-6}$ & 4.0319 & 13.7050 \\

            $2^{-6}$ &  $4.7998\times10^{-7}$ & 4.1014 & 39.3828 \\
            \hline
          \end{tabular} & \begin{tabular}{cccc}
            \hline
            $\sigma$ &  $\||e_{\sigma}|\|_{0,\infty}$ & R($\sigma$) & CPU (s)\\
             \hline
            $2^{-4}$ &   $1.1438\times10^{-3}$ &   -- &  0.9015  \\

            $2^{-5}$ &   $2.9044\times10^{-4}$ & 1.9775 & 2.0471  \\

            $2^{-6}$ &   $7.3440\times10^{-5}$ & 1.9836 & 4.8784  \\

            $2^{-7}$ &   $1.8768\times10^{-5}$ & 1.9683 & 12.6411  \\

            $2^{-8}$ &   $4.6886\times10^{-6}$ & 2.0017 & 34.6237  \\
            \hline
          \end{tabular}
          \end{array}
          \end{equation*}

          \begin{equation*}
          \begin{array}{c c}
          \text{\,method\,developed\,in\,\,}\cite{tr},\text{\,\,}\sigma=2^{-12},\text{\,\,} &
           \text{\,method\,\,proposed\,\,in,\,\,}\cite{tr},\text{\,\,}h=2^{-6},\text{\,\,}\\
          \begin{tabular}{ccc}
            \hline
            $h$ &  $\||e_{h}|\|_{0,\infty}$ & R($h$) \\
             \hline
            $2^{-1}$ &   $3.4021\times10^{-3}$ &   --- \\

            $2^{-2}$ &   $1.9640\times10^{-4}$ & 4.1145 \\

            $2^{-3}$ &   $1.2048\times10^{-5}$ & 4.0271  \\

            $2^{-4}$ &   $7.4896\times10^{-7}$ & 4.0076  \\

            $2^{-5}$ &   $4.6191\times10^{-8}$ & 4.0192 \\
            \hline
          \end{tabular} & \begin{tabular}{ccc}
            \hline
            $\Delta t$ &  $\||e_{\Delta t}|\|_{0,\infty}$ & R($\Delta t$)\\
             \hline
            $2^{-3}$ &   $1.2725\times10^{-4}$ &   --- \\

            $2^{-4}$ &   $3.3481\times10^{-5}$ & 1.9262  \\

            $2^{-5}$ &   $8.5727\times10^{-6}$ & 1.9655 \\

            $2^{-8}$ &   $2.1664\times10^{-6}$ & 1.9845 \\

            $2^{-7}$ &   $5.4255\times10^{-7}$ & 1.9974 \\
            \hline
          \end{tabular}
          \end{array}
          \end{equation*}
        Tables $3$-$4$ indicate that the constructed approach $(\ref{28})$-$(\ref{29})$ is more efficient than the method discussed in \cite{tr}.\\

        It follows from \textbf{Figures} $\ref{figure1}$-$\ref{figure2}$ and \textbf{Tables} $1$-$4$ that the developed algorithm $(\ref{28})$-$(\ref{29})$ is unconditionally
        stable, temporal second-order convergent and spatial fourth-order accurate. These computational results confirm the theoretical studies provided in Theorems $\ref{t1}$.

     \section{General conclusions and future works}\label{sec5}
     In this paper, we have constructed a high-order computational technique which is based on a combination of finite element formulation and interpolation approach for
     solving a nonlinear distributed delay Sobolev model. Both stability and error estimates of the proposed algorithm have been analyzed using a strong norm which is
     equivalent to the $L^{\infty}(0,T_{f};\text{\,}H^{1})$-norm. The theory suggested that the developed numerical method is unconditionally stable, spatial fourth-order
     accurate and temporal second-order convergent. The theoretical results have been confirmed by two numerical examples. Furthermore, both theoretical and computational
     studies have shown that the new algorithm is easy to implement and more efficient than a broad range of numerical methods discussed in the literature
     \cite{11tr,tr,21tr,33tr,25tr} for solving a general class of delay Sobolev equations. Our future works will develop a high-order numerical method in
     an approximate solution of distributed-order time fractional two-dimensional Sobolev problems.

     \subsection*{Ethical Approval}
     Not applicable.
     \subsection*{Availability of supporting data}
     Not applicable.
     \subsection*{Declaration of Interest Statement}
     The author declares that he has no conflict of interests.
     \subsection*{Funding}
     This work was supported and funded by the Deanship of Scientific Research at Imam Mohammad Ibn Saud Islamic University (IMSIU) (grant number IMSIU-DDRSP-NS25).
     \subsection*{Authors' contributions}
     The whole work has been carried out by the author.

    \newpage

        \begin{figure}
         \begin{center}
        Stability and convergence of the proposed high-order computational technique, $\alpha=\beta=1$
         \begin{tabular}{c c}
         \psfig{file=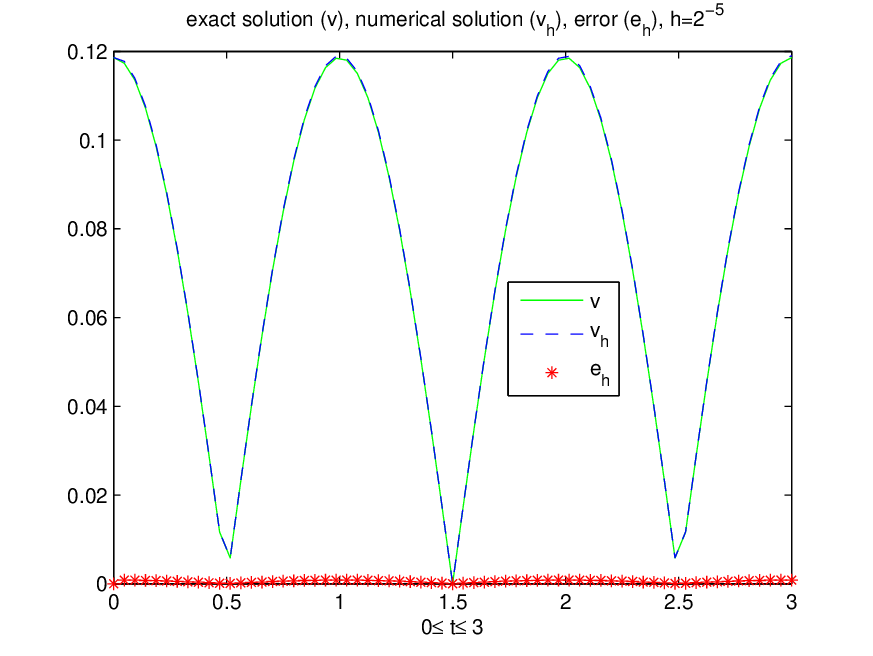,width=6cm} & \psfig{file=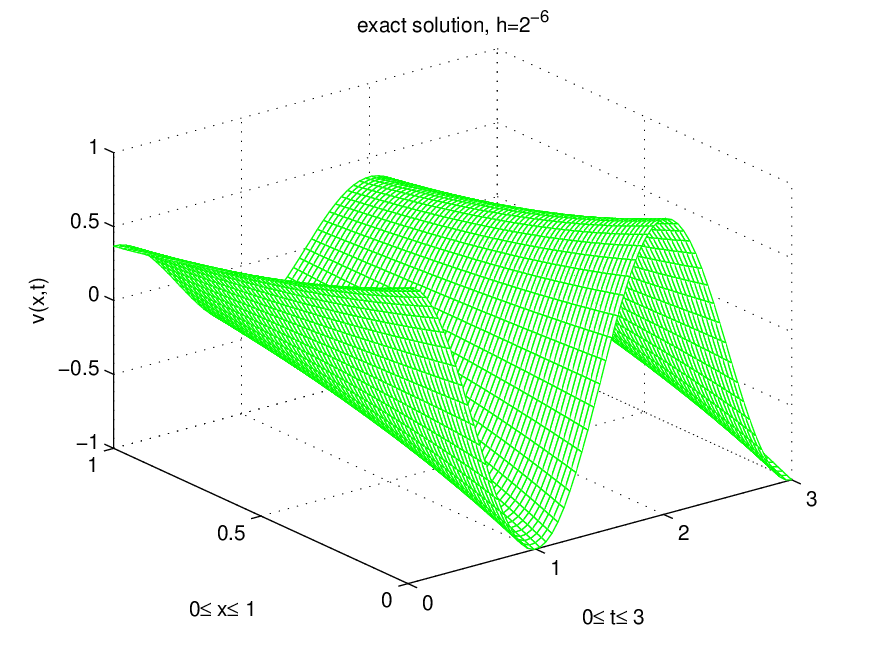,width=6cm}\\
         \psfig{file=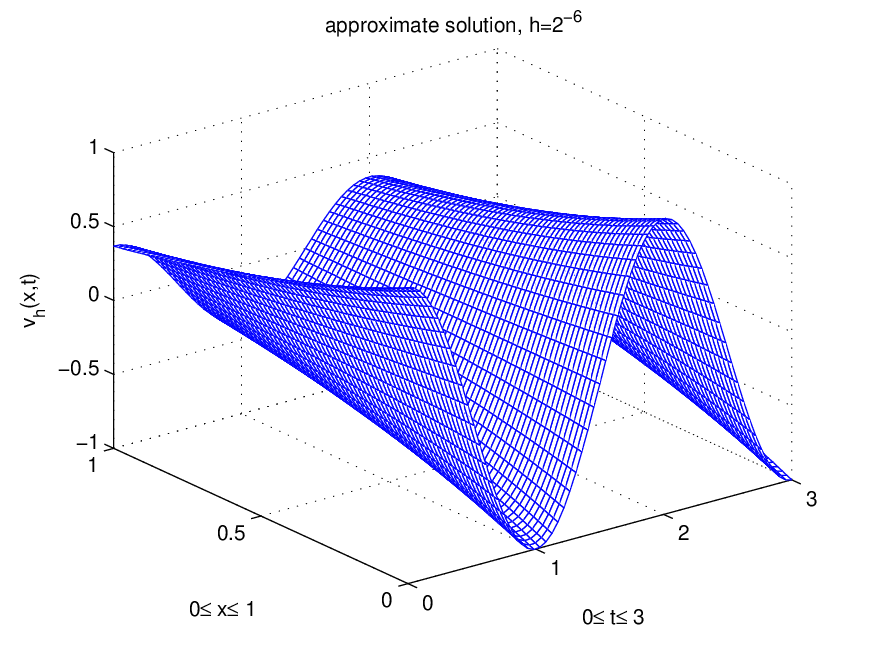,width=6cm} & \psfig{file=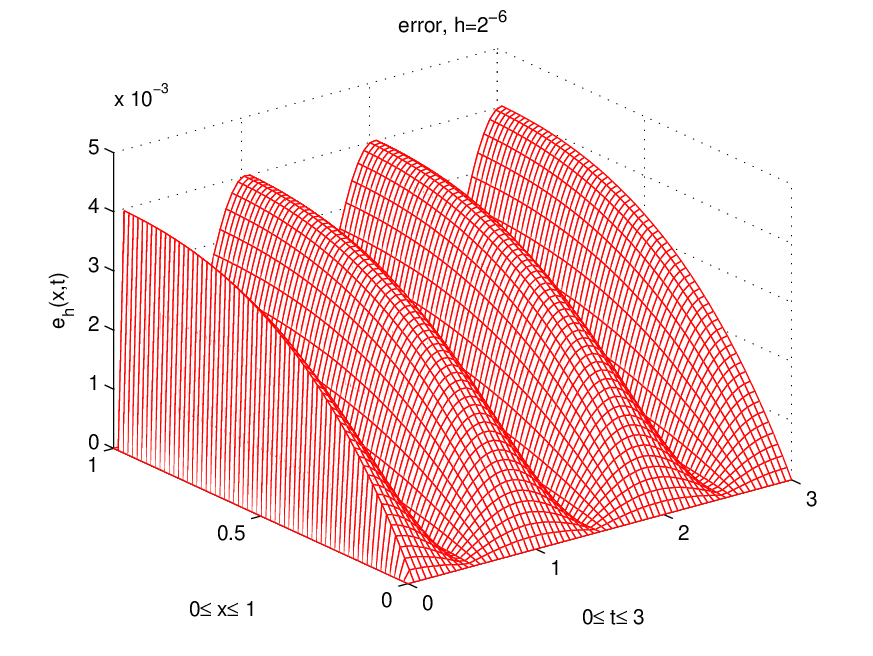,width=6cm}\\
         \end{tabular}
        \end{center}
        \caption{exact solution, numerical one and error, corresponding to \textbf{Example 1}}
        \label{figure1}
        \end{figure}

           \begin{figure}
         \begin{center}
         Stability analysis and convergence of the developed approach, $\alpha=\beta=1$
         \begin{tabular}{c c}
         \psfig{file=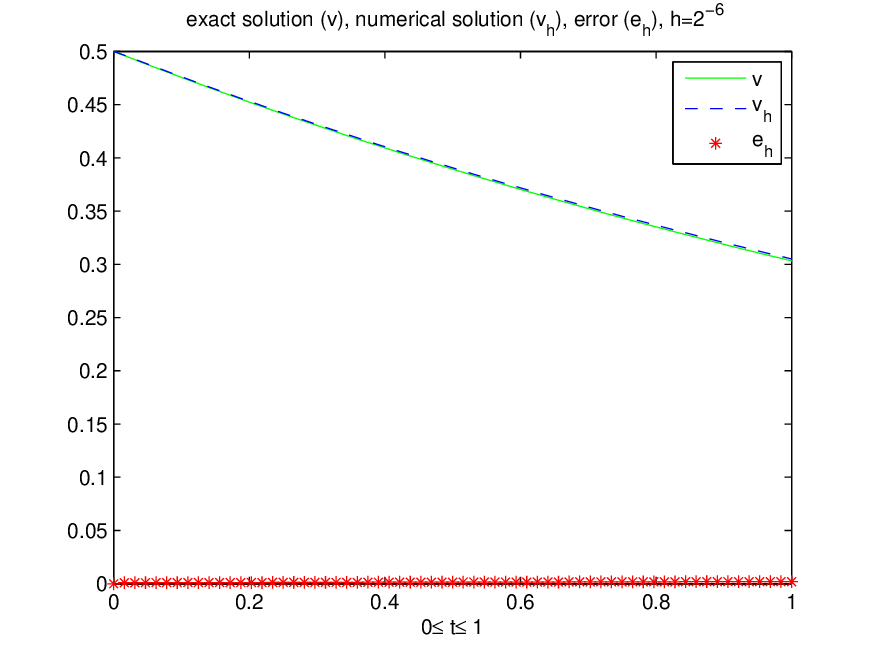,width=6cm} & \psfig{file=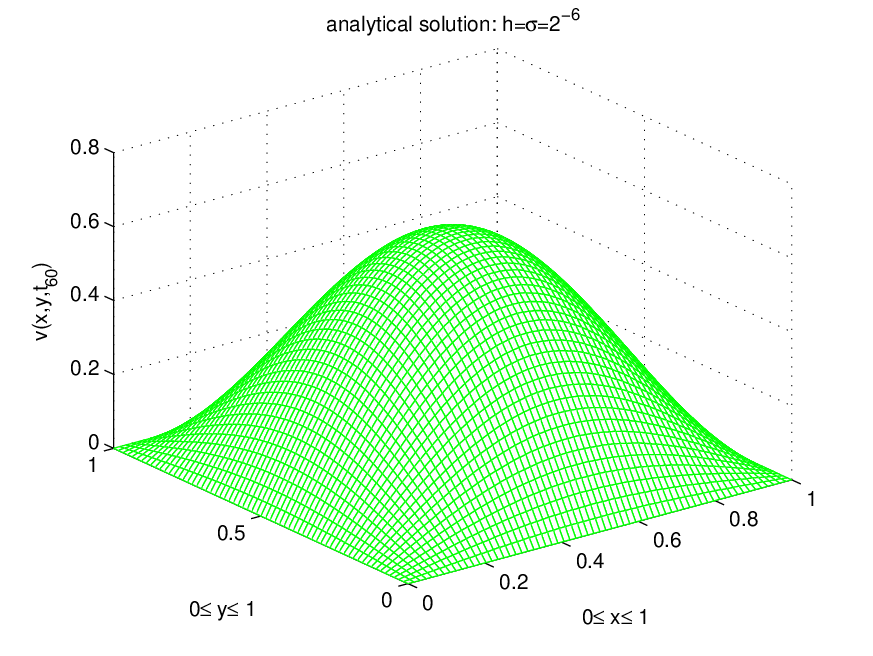,width=6cm}\\
         \psfig{file=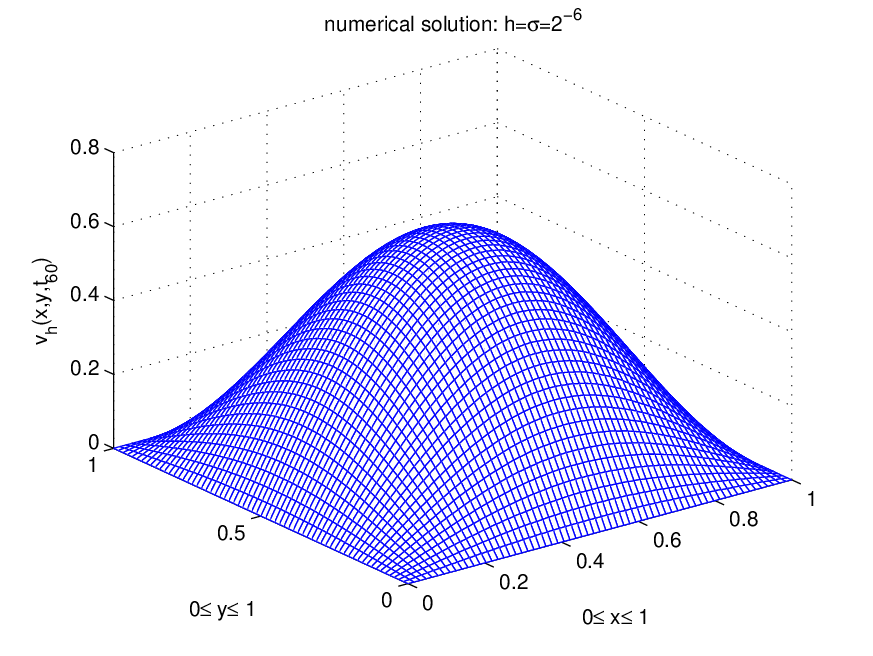,width=6cm} & \psfig{file=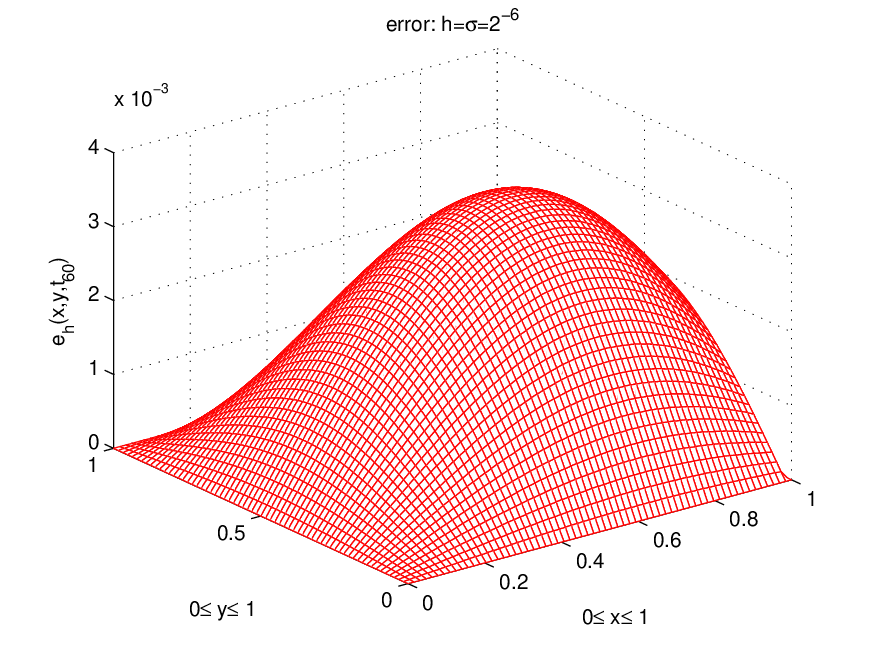,width=6cm}\\
         \end{tabular}
        \end{center}
        \caption{analytical solution, approximate one and error, associated with \textbf{Example 2}}
        \label{figure2}
        \end{figure}

       \end{document}